\definecolor{lightgray}{gray}{0.95}
\title[A Yee-like scheme on unstructured grids]{A Yee-like finite-element scheme for \\ Maxwell's equations on unstructured grids}
\author{B. Radu$^\dag$ \and H. Egger$^{\dag,*}$}
\address{\small$^\dag$Johann Radon Institute for Computational and Applied Mathematics, Linz, Austria\\
$^*$Institute for Computational Mathematics, Johannes-Kepler University Linz, Austria}
\email{bogdan.radu@ricam.oeaw.ac.at}
\email{herbert.egger@jku.at}
\definecolor{mygray}{rgb}{.5,.5,.5}
\newtheorem{lemma}{Lemma}[section]
\newtheorem{theorem}[lemma]{Theorem}
\newtheorem{corollary}[lemma]{Corollary}
\newtheorem{method}[lemma]{Method}
\theoremstyle{definition}
\newtheorem{remark}[lemma]{Remark}
\def\Th{\mathcal{T}_h}
\def\calE_h{\mathcal{E}_h}
\def\calE_hs{\mathcal{E}_h^\sigma}
\def\calE_hz{\mathcal{E}_h^0}
\def\EE{\mathscr{E}}
\def\dtautau{\partial_{\tau\tau}}
\def\dtau{\partial_{\tau}}
\def\dt{\partial_{t}}
\def\dtt{\partial_{tt}}
\def\dttt{\partial_{ttt}}
\def\dtttt{\partial_{tttt}}
\def\curl{\operatorname{curl}}
\def\wPi{\widetilde\Pi}
\def\mysigma{\delta}
\def\NC{\mathcal{N\mkern-2mu C\mkern-2mu}}
\def\N{\mathcal{N}}
\def\K{K}
\def\Th{\mathcal{T}_h}
\def\grad{\nabla}
\def\RR{\mathbb{R}}
\def\E{{E}}
\def\a{{a}}
\def\r{{r}}
\def\v{{v}}
\def\u{{u}}
\def\V{{V}}
\def\wt{\widetilde}
\def\wh{\widehat}
\def\ttE{\mathsf{E}}
\def\ttG{\mathsf{G}}
\def\ttK{\mathsf{K}}
\def\ttP{\mathsf{P}}
\def\ttR{\mathsf{R}}
\def\ttK{\mathsf{K}}
\def\ttM{\mathsf{M}}
\def\ttP{\mathsf{P}}
\def\ttP{\mathsf{P}}
\def\ttQ{\mathsf{Q}}
\def\ttv{\mathsf{v}}
\def\ttf{\mathsf{f}}
\def\ttF{\mathsf{F}}
\def\ttg{\mathsf{g}}
\def\eps{\varepsilon}
\def\calE{\mathcal{E}}
\newcommand{\mylabel}[2]{#2\def\@currentlabel{#2}\label{#1}}
\def\la{\langle}
\def\ra{\rangle}
\newcommand{\leqnomode}{\tagsleft@true}
\newcommand{\reqnomode}{\tagsleft@false}
\newcommand{\vertiii}[1]{{\left\vert\kern-0.25ex\left\vert\kern-0.25ex\left\vert #1 
    \right\vert\kern-0.25ex\right\vert\kern-0.25ex\right\vert}}
\newcommand{\tnorm}{{\vert\kern-0.25ex\vert\kern-0.25ex\vert}}
\begin{document}

\begin{abstract}
A novel finite element scheme is studied for solving the time-dependent Maxwell's equations on unstructured grids efficiently. Similar to the traditional Yee scheme, the method has one degree of freedom for most edges and a sparse inverse mass matrix. This allows for an efficient realization by explicit time-stepping without solving linear systems. 
The method is constructed by algebraic reduction of another underlying finite element scheme which involves two degrees of freedom for every edge. Mass-lumping and additional modifications are used in the construction of this method to allow for the mentioned algebraic reduction in the presence of source terms and lossy media later on. 
A full error analysis of the underlying method is developed which by construction also  carries over to the reduced scheme and allows to prove convergence rates for the latter. 
The efficiency and accuracy of both methods are illustrated by numerical tests. 
The proposed schemes and their analysis can be extended to structured grids and in special cases the reduced method turns out to be algebraically equivalent to the Yee scheme. 
The analysis of this paper highlights possible difficulties in extensions of the Yee scheme to non-orthogonal or unstructured grids, discontinuous material parameters, and non-smooth source terms, and also offers potential remedies.
\end{abstract}

\maketitle

\begin{quote}
\noindent
{\small {\bf Keywords:} 
Maxwell's equations, explicit time stepping, finite element methods, mass-lumping, Yee-like schemes}

\smallskip 
\noindent
{\small {\bf AMS subject classifications: 35Q61, 65M60} 
}
\end{quote}

\section{Introduction}
\label{sec:intro}

We consider the efficient numerical simulation of electromagnetic wave propagation through linear non-dispersive lossy media. As the basic mathematical model, we use Maxwell's equations in second-order form, i.e. 
\begin{align} 
\eps \dtt \E + \sigma \dt \E + \curl(\nu \curl \E) &= f,  \qquad \text{in }\Omega, \, t>0, \label{eq:maxwell1} \\
n \times (\nu \curl E) &= g, \qquad \text{on } \partial\Omega, \, t>0, \label{eq:maxwell2}
\end{align}
Here $\eps$ and $\sigma$ denote the electric permittivity and conductivity, and $\nu$ the magnetic reluctivity of the medium; further $E$ is the electric field intensity,  $f$, $g$ describe the excitation by volume or surface currents, and $\Omega$ is the computational domain.

One of the standard approaches for solving \eqref{eq:maxwell1}--\eqref{eq:maxwell2} numerically is the Yee scheme and its relatives, e.g., the finite difference time domain (FDTD) method and the finite-integration technique (FIT); see \cite{Yee66}, \cite{Taflove80,Taflove05}, and \cite{Weiland77,Weiland03} for details.
The common algebraic form of all these methods reads 
\begin{align} \label{eq:yee}
\ttM_\eps \frac{\ttE^{n+1} - 2 \ttE^n + \ttE^n}{\tau^2} + \ttM_\sigma \frac{\ttE^{n+1} - \ttE^{n-1}}{2\tau} + \ttK_\nu \ttE^n &= \ttf^n + \ttg^n.
\end{align}
This allows for an efficient realization by explicit time stepping, whenever the mass matrices $\ttM_\eps$, $\ttM_\sigma$ are diagonal. On structured orthogonal grids and for homogeneous material distributions, such constructions are possible and lead to second-order accurate approximations in space and time; see e.g. \cite{Cohen02,Taflove05,Yee66}. These results remain valid for non-uniform orthogonal grids \cite{MonkSuli94}; further see \cite{Cohen02,CohenMonk95} for related higher-order methods.
The extension of the Yee scheme to discontinuous material parameters or source terms and to non-orthogonal or even unstructured grids is, however, not straightforward; see \cite{Holland83,Lee92,SchuhmannWeiland98,SchuhmannWeiland98b} and \cite{BossavitKettunen99,CodecasaKapidaniSpecognaTrevisan18,CodecasaPoliti08} for some attempts in the latter case. To the best of our knowledge, a rigorous convergence analysis of Yee-like schemes in such situations is not available up to date.

An alternative approach that allows the construction of stable schemes for structured as well as unstructured grids, and also for discontinuous material parameters, is offered by finite element discretizations. A rigorous error analysis for various methods can be developed; see e.g. \cite{Monk92a,Monk92c,Monk93}; we refer to \cite{Cohen02,Joly03} for an overview of different approaches, their analysis, and further references. 
Together with the leap-frog time-stepping scheme, the finite element approximation of \eqref{eq:maxwell1}--\eqref{eq:maxwell2} again leads to algebraic systems of the form \eqref{eq:yee}. 
In contrast to the finite difference schemes mentioned above, the mass matrices $\ttM_\eps$ and $\ttM_\sigma$ are, however, usually not diagonal, and the realization of \eqref{eq:yee} thus requires the solution of linear systems in every time step. Therefore, the use of implicit time-stepping schemes has been advocated in the literature \cite{MakridakisMonk95,Monk93a}. 

This inherent disadvantage of finite element approximations can be overcome by mass-lumping, which aims at replacing $\ttM_\eps$, $\ttM_\sigma$ in \eqref{eq:yee} by diagonal or block-diagonal matrices. For orthogonal grids, corresponding schemes have been proposed in \cite{CohenMonk95} and modifications for unstructured grids have been considered in \cite{ElmkiesJoly97b,ElmkiesJoly97}; we also refer to \cite{Cohen02} for a detailed discussion. A full convergence analysis for a second order method has been given recently in \cite{EggerRadu21a_maxwelltet,Radu22},  and in \cite{KapidaniCodecasaSchoberl21} higher order approximations of Maxwell's equations were obtained based on staggered grid complexes. 
Another approach for generating non-conforming finite element approximations with block-diagonal mass matrices is provided by discontinuous-Galerkin methods \cite{CohenPernet16,HesthavenWarburton08}. For low-order approximations however, these methods suffer from a substantial increase in the number of degrees of freedom; we refer to \cite{GeeversMulderVegt18} for comparison with mass-lumping schemes in the context of elastodynamics. 

\subsection*{Main contributions}
In this paper, we propose and analyze a fully discrete finite element approximation for \eqref{eq:maxwell1}--\eqref{eq:maxwell2} which can be considered as a natural extension of the Yee scheme to unstructured grids. This method involves only one degree of freedom for most edges and it can be realized efficiently as an explicit time-stepping scheme; moreover, a full convergence analysis is possible.
The approach and its analysis are based on the following key ideas, already presented in \cite{EggerRadu20c_maxwellyee}: 
First, a finite element method is considered involving two degrees of freedom on each edge. Mass lumping is achieved by appropriate numerical quadrature and a corresponding choice of basis functions, and a full error analysis can be developed based on standard arguments.
In a second step, the method is then reduced on the algebraic level to a scheme that involves only one degree of freedom for most edges. While a direct variational characterization of this scheme is no longer possible, its close relation to the first method nevertheless allows to develop a rigorous error analysis.
Compared to \cite{EggerRadu20c_maxwellyee}, some non-trivial modifications are required in the construction and the analysis of the proposed methods to deal with non-trivial conductivities $\sigma$ and inhomogeneous problem data $f$, $g$.
On orthogonal grids and for special situations, the reduced method becomes equivalent to the traditional Yee scheme. The analysis of this paper therefore also offers some recipes for extensions of the Yee scheme to non-orthogonal grids and discontinuous parameters and problem data.
The algebraic form of the numerical scheme obtained after reduction also shares similarities with the approaches of \cite{CodecasaKapidaniSpecognaTrevisan18,CodecasaPoliti08}, which will be briefly discussed at the end of the manuscript.  

\subsection*{Outline}

In Section~\ref{sec:prelim}, we introduce our notation and basic assumptions. 
The methods and main results are presented in Section~\ref{sec:main}. 
The error analysis is developed in Section~\ref{sec:proofs} and the implementation of the method is discussed in some detail in Section~\ref{sec:implementation}.
For an illustration of our theoretical results, some computational tests are presented in Section~\ref{sec:numerics}, and we close with a short discussion.

\section{Notation and basic assumptions} 
\label{sec:prelim}

We consider a three-dimensional setting and assume that $\Omega \subset \RR^3$ is a bounded polyhedral Lipschitz domain.
Throughout the paper, we assume that 
\begin{enumerate}[topsep=1em]
\item [(A1)] %
$\Th$ is a geometrically-conforming non-overlapping partition of $\Omega$ into tetrahedral elements $\K \in \Th$; the mesh $\Th$ is shape-regular and quasi-uniform, i.e., $\gamma h_\K^3 \le |\K| \le h_\K^3$ and $\delta h \le h_\K \le h$ for all $\K \in \Th$ with $\gamma,\delta>0$. 
\end{enumerate}
As usual, $h_\K$ and $|\K|$ denote the diameter and the volume of the element $\K \in \Th$, and $h=\max_\K h_\K$ is the global mesh size; see \cite{ErnGuermond} for further details.
All results are presented in detail for the three-dimensional setting and unstructured grids, but the translation to two dimensions and structured grids is possible and more or less straightforward; see \cite[Chapter 5]{Radu22} and Section~\ref{sec:numerics}.

The material parameters and problem data are required to be sufficiently regular and to satisfy the usual physical bounds. Moreover, material discontinuities shall be resolved by the mesh. 
For ease of presentation, we thus assume that
\begin{enumerate}[topsep=1em]
\item[(A2)]  %
$\eps$, $\sigma$, $\nu \in P_0(\Th)$ and $0 \le \eps,\sigma,\nu \le \overline c$ and $\eps,\nu \ge \underline c$ for some constants $\underline c, \overline c>0$. \\[-1ex]
\item[(A3)] %
$f : [0,T] \to L^2(\Omega)^3$ and $g: [0,T] \to L^2(\partial\Omega)^3$ are smooth functions of time, with $f(0)=0$ and $g(0)=0$, and the initial conditions are $E(0)=\dt E(0)=0$. 
\end{enumerate}
We write $P_k(\Th)=\{v : v|_K \in P_k(K), \ \forall K \in \Th\}$ for the space of piecewise polynomials of degree $k$ on $\Th$.
Under these assumptions, the existence of a unique solution to \eqref{eq:maxwell1}--\eqref{eq:maxwell2} can be established by standard arguments; see e.g. \cite{Leis88,Monk92a}. 
Also, more general conditions could be treated with minor modifications to our analysis.

\subsection*{Function spaces}

We use standard symbols $L^2(\Omega)$, $H^1(\Omega)$, and $H(\curl;\Omega)$ for the spaces of square-integrable function with square integrable weak gradients and curls, respectively; see \cite{Monk03} for details. 
The norm of a space $X$ is denoted by $\|\cdot\|_{X}$ and we will often write $\la a,b\ra = \int_\Omega a \cdot b \, dx$ and $\la a,b\ra_{\partial}=\int_{\partial\Omega} a \cdot b \, ds(x)$ for the scalar products of two functions in $L^2(\Omega)^3$ and $L^2(\partial\Omega)^3$, respectively. 
For the error analysis, we use $H^k(\Th)=\{v \in L^2(\Omega) : v|_\K \in H^k(\K), \ \forall \K \in \Th\}$, to denote spaces of piecewise smooth functions, and we write $\|v\|_{H^k(\Th)} = (\sum_\K \|v\|_{H^k(\K)}^2)^{1/2}$ for the corresponding norms. 
We further denote by $L^p(0,T;X)$ the Bochner spaces of functions with values in $X$ whose $p$-th power is integrable in time. The functions in $W^{k,p}(0,T;X)$ further have weak derivatives in $L^p(0,T;X)$. 
For brevity, we will sometimes write $L^p(X)$ and $W^{k,p}(X)$, and omit explicit reference to the time interval.

\subsection*{Space discretization}

For the spatial approximation of the electric field $E$, we first consider Nédélec finite elements of type II, i.e.
\begin{align}
V_h = \{v_h \in H(\curl;\Omega) : v_h|_\K \in P_1(\K)^3 \ \forall \K \in \Th\}.
\end{align}
This amounts to the space of piecewise linear vector-valued functions with tangential continuity across element boundaries; see \cite{BoffiBrezziFortin13,Monk03,Nedelec86}. 
Let us recall that functions in $V_h$ have two degrees of freedom for every edge, and the canonical interpolation operator for the space is given by $\Pi_h : H(\curl;\Omega) \cap H^1(\Th)^3 \to V_h$, with 
\begin{align}
\int_e \Pi_h v \cdot \tau_e \, p_e \, ds &= \int_e v \cdot \tau_e  \, p_e \, ds \qquad \forall p_e \in P_1(e), \ e \in \calE_h. 
\end{align}
Here $\calE_h = \{e_{ij} : i<j\}$ is the set of edges $e_{ij}=(v_i,v_j)$ and $\tau_e$ is the unit tangential vector on $e=e_{ij}$ pointing from vertex $v_i$ to $v_j$ with $i<j$. 

For a given subset $\wt\calE_h \subset \calE_h$ of edges, we define the corresponding subspace
\begin{align}
\wt V_h=\{v_h \in V_h : v_h \cdot \tau_e \in P_0(e) \ \forall e \in \wt\calE_h\} \subset V_h,
\end{align}
which consists of functions in $V_h$ having only constant tangential trace, and therefore only one degree of freedom for edges $e \in \wt\calE_h$. We thus call $\wt V_h$ the \emph{reduced space} in the following. 
The canonical interpolation operator for  $\wt V_h$ is given by $\wt \Pi_h : H^1(\Th)^3 \cap H(\curl;\Omega) \to \wt V_h$ with
\begin{align}
\int_e \wt \Pi_h v \cdot \tau_e \, p_e \, ds &= \int_e v \cdot \tau_e  \, p_e \, ds \qquad \forall p_e \in P_{k_e}(e), \ e \in \calE_h, 
\end{align}
and with degree $k_e=0$ for $e \in \wt\calE_h$ and $k_e=1$ for $e \in \calE_h \setminus\wt\calE_h$.
For any choice of $\wt\calE_h \subset\calE_h$, we have the inclusions
\begin{align*}
\N_0(\Th) \cap H(\curl;\Omega) \subset \wt V_h \subset  \NC_1(\Th) \cap H(\curl;\Omega).
\end{align*}
Here $\N_0$ and $\NC_1$ denote the lowest order Nédélec elements of type I and II. This ensures good approximation properties for both spaces; see \cite{BoffiBrezziFortin13,Nedelec80,Nedelec86} for details. 
For the choice $\wt\calE_h=\calE_h$ or $\wt\calE_h=\emptyset$, one of the two inclusions becomes an identity.

\subsection*{Mass lumping}

For the approximation of some of the integrals arising in the finite element approximation of \eqref{eq:maxwell1}--\eqref{eq:maxwell2}, we use numerical integration by the vertex rule; this will allow for mass-lumping later on.
For ease of notation, we introduce
\begin{align}
\la a,b\ra_h = \sum\nolimits_T   \tfrac{|T|}{4} \sum\nolimits_{v_i \in T} a(v_i) \cdot b(v_i) . 
\end{align}
Here $a,b$ are assumed to be piecewise smooth vector-valued functions over the mesh~$\Th$.
Let us note that the quadrature is exact if $a \cdot b \in P_1(\Th)$.

\subsection*{Time discretization}
 
Let $\tau=T/N$ and $t^n = n \tau$ be a sequence of uniformly spaced time steps. Further let $(a^n)_{n \ge 0} \subset X$ be a sequence in some vector space $X$. Then 
\begin{align}
\dtautau a^n := \frac{a^{n+1} - 2 a^n + a^{n-1}}{\tau^2} 
\qquad \text{and} \qquad 
\dtau a^{n-1/2} := \frac{a^n-a^{n-1}}{\tau},
\end{align}
are used to denote the standard central difference quotients approximating the second and first derivative at time $t=t^n$ and $t=t^{n-1/2} = t^n-\frac{1}{2} \tau$, respectively.

\section{Main results}
\label{sec:main}

For the numerical approximation of \eqref{eq:maxwell1}--\eqref{eq:maxwell2} with homogeneous initial conditions, see assumption (A3), we now consider the following fully discrete scheme.
\begin{method} \label{meth:main}
Let $\wt\calE_h \subset \calE_h$ and $\wt \Pi_h : V_h \to \wt V_h$ denote the appropriate projection. 
Find $E_h^n \in V_h$, $0 \le n \le N$, with $E_h^0=E_h^1=0$ and such that
\begin{align} \label{eq:method}
\la(\eps + \tfrac{\tau}{2}\sigma) \dtautau E_h^n, v_h\ra_h + \la\sigma \wt \Pi_h & \dtau E_h^{n-1/2},\wt\Pi_h v_h\ra_h + \la\nu \curl E_h^n,\curl v_h\ra \\
&= \la f(t^n),v_h\ra + \la g(t^n),v_h\ra_{\partial\Omega}, \qquad \forall \ 1 \le n < N. \notag
\end{align}
The solution (sequence) will be abbreviated by the symbol $E_h = (E_h^n)_{0 \le n \le N}$.
\end{method}
Independently of the choice of the set of edges $\wt \calE_h \subset \calE_h$, on which the number of degrees of freedom is reduced, the implementation of the method leads to a finite-dimensional recursion of the form
\begin{align} \label{eq:impl}
\ttM_{\eps + \tau \sigma/2} \frac{\ttE^{n+1} - 2 \ttE^n + \ttE^{n-1}}{\tau^2} + \widehat \ttM_{\sigma} \frac{\ttE^n - \ttE^{n-1}}{\tau} + \ttK_\nu \ttE^n 
&= \ttf^n + \ttg^n.
\end{align}
The well-posedness of the discretization scheme then follows immediately from the regularity of the matrix $\ttM_{\eps + \tau \sigma/2}$, which is a direct consequence of Lemma~\ref{lem:equiv} below. 
In Section~\ref{sec:implementation}, we further show that an appropriate choice of basis functions for the space $V_h$, adopted to the numerical quadrature, leads to a block-diagonal mass matrix $\ttM_{\eps + \tau \sigma/2}$, such that time stepping in \eqref{eq:impl} can be realized efficiently. 

The algebraic form \eqref{eq:impl} reveals that Method~\ref{meth:main} is based on an explicit time-stepping scheme and a restriction on the time step size $\tau$ is, therefore, required to ensure discrete stability and convergence with $h,\tau \to 0$. 
We thus assume that 
\begin{enumerate}[topsep=1em]
\item[(A4)] \label{ass:A4} 
the time step $\tau>0$ is chosen to satisfy for all $\v_h \in \V_h$ the inequality
\begin{align*}
   \tfrac{{\tau}^2}{4}\la\nu\curl\v_h,\curl \v_h\ra + \tfrac{\tau}{2} \left|\la\sigma\wt\Pi_h \v_h,\wt\Pi_h \v_h\ra - \la\sigma\v_h,\v_h\ra\right| \le \tfrac{1}{2} \la\varepsilon\v_h,\v_h\ra_h.
\end{align*}
\end{enumerate}
For conductivity $\sigma=0$ and $\la\cdot,\cdot\ra_h=\la\cdot,\cdot\ra$, this assumption reduces to the usual CFL condition as used, e.g., in \cite{Cohen02,Joly03}.
Under our assumptions on the mesh and the model parameters, one can verify that $\tau\le Ch$ for some appropriate constant $C>0$ is sufficient to guarantee condition (A4); see Section~\ref{sec:numerics}.
In practice, an appropriate time step $\tau$ satisfying (A4) can be found by performing a few vector iterations. 

To guarantee good approximation properties, we further need some restriction on the set $\wt \calE_h$ of edges, on which the polynomial order is reduced.
We thus require that
\begin{enumerate}[topsep=1em]
\item[(A5)] \label{ass:A5}
$\sigma$ is continuous across edges $e \in \wt \calE_h$ inside $\Omega$, and $\sigma=0$ for all $e \in \wt \calE_h$ on $\partial\Omega$.
\end{enumerate}
This condition simply means that we stay with two degrees of freedom on edges where the conductivity $\sigma$ is either discontinuous or non-trivial at the boundary. 
The reason for this restriction will become clear from the error analysis given in the next section and its necessity will be illustrated by numerical tests.

For ease of notation, we write $\|u_h\|_{\ell_\infty(X)} = \max_{0 \le k \le N-1} \|u_h^{n+1/2}\|_X$ in the following statements, and we write $u^{n+1/2}=u(t^{n+1/2})$ for functions $u$ that are continuous in time.
We further denote by $\wh u^{\,n+1/2} = \frac{1}{2}(u^{n+1}+u^n)$ the average at intermediate time steps. 
This allows us to present our first main result as follows. 
\begin{theorem} \label{thm:main1}
Let $E$ be a sufficiently smooth solution of \eqref{eq:maxwell1}--\eqref{eq:maxwell2} and let (A1)--(A5) hold.
Then Method~\ref{meth:main} is well-defined and the discrete solution $E_h=(E_h^n)_n$ satisfies
\begin{align*}%
\|\dt E - \dtau E_h\|_{\ell_\infty(L^2(\Omega))} &+ \|\curl (E - \widehat E_h)\|_{\ell_\infty(L^2(\Omega))}\le C(E) h + C'(E) \tau^2, 
\end{align*}
with constants
\begin{align*}
C(E) &= \|\dt E\|_{L^\infty(H^1(\Th))} + \|\curl E\|_{L^\infty(H^1(\Th))}  + \|\dtt E\|_{L^\infty(H^1(\Th))} \\
&\qquad \qquad \qquad %
+ \|\curl\dt E\|_{L^1(H^1(\Th))} + \|\dt E\|_{L^1(H^1(\Omega))}, \\
C'(E) &= \|\dtttt E\|_{L^1(L^2(\Omega))} + \|\dttt E\|_{L^1(L^2(\Omega))} + \|\curl\dtt E\|_{L^1(H^1(\Th))} . \qquad
\end{align*}
The implementation leads to a time-stepping scheme \eqref{eq:impl}, and for an appropriate choice of a basis, the matrix $\ttM_{\eps + \tau\sigma/2}$ is block-diagonal, while $\widehat \ttM_{\sigma}$ and $\ttK_\nu$ are sparse.
\end{theorem}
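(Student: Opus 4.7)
The plan is to carry out a standard energy-method analysis for leapfrog-type discretizations, adapted to handle the mass-lumped inner product $\la\cdot,\cdot\ra_h$ and the $\wPi_h$-projected dissipative term, which are the novel features of Method~\ref{meth:main}. Well-posedness is immediate from Lemma~\ref{lem:equiv}: that lemma yields uniform equivalence of $\la\eps\cdot,\cdot\ra_h$ and $\la\eps\cdot,\cdot\ra$ on $V_h$, and since the additional contribution $\tfrac{\tau}{2}\la\sigma\cdot,\cdot\ra_h$ is nonnegative, the matrix $\ttM_{\eps+\tau\sigma/2}$ is symmetric positive definite, so each iterate $E_h^{n+1}$ is uniquely determined from $E_h^n$ and $E_h^{n-1}$.

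For the error estimate I would split using the canonical Nédélec-II interpolant $\wPih$:
\begin{align*}
E^n - E_h^n = (E^n - \wPih E^n) + (\wPih E^n - E_h^n) =: \rho^n + \theta_h^n.
\end{align*}
Subtracting Method~\ref{meth:main} from the continuous equation tested with $v_h \in V_h$ then produces an equation for $\theta_h^n$ of the form
\begin{align*}
\la(\eps+\tfrac{\tau}{2}\sigma)\dtautau\theta_h^n,v_h\ra_h + \la\sigma\wPi_h\dtau\theta_h^{n-1/2},\wPi_h v_h\ra_h + \la\nu\curl\theta_h^n,\curl v_h\ra = \la r^n,v_h\ra,
\end{align*}
whose consistency residual $r^n$ bundles four contributions: the vertex-rule quadrature error, which is exact on $P_1$-pairings and therefore contributes $O(h)$ on piecewise $H^1$ inputs; the interpolation defect $\rho^n$ together with its discrete time derivatives; the mismatch between $\sigma\dt E$ and $\sigma\wPi_h\dtau\wPih E$ produced by the reduced tangential projection $\wPi_h$; and the leapfrog truncation of order $\tau^2$, obtained from Taylor expansion of $\dtautau$ and of the right-hand side. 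Assumption (A5) is essential here, because both the quadrature and the projection defects would degrade to lower order on edges where $\sigma$ is discontinuous or non-trivial on the boundary, and (A5) excludes precisely such edges from $\wt\calE_h$.

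The stability step proceeds by testing the consistency equation with the symmetric discrete derivative $v_h = \dtau\theta_h^{n+1/2} + \dtau\theta_h^{n-1/2}$, which after telescoping yields a discrete energy identity for
\begin{align*}
\mathcal{E}_h^{n+1/2} = \tfrac12\la(\eps+\tfrac{\tau}{2}\sigma)\dtau\theta_h^{n+1/2},\dtau\theta_h^{n+1/2}\ra_h + \tfrac12\la\nu\curl\wh\theta_h^{n+1/2},\curl\wh\theta_h^{n+1/2}\ra,
\end{align*}
modulo the sign-indefinite leapfrog cross contribution $-\tfrac{\tau^2}{4}\la\nu\curl\dtau\theta_h^{n+1/2},\curl\dtau\theta_h^{n+1/2}\ra$ and a mismatch of the form $\tfrac{\tau}{2}(\la\sigma\wPi_h\dtau\theta_h,\wPi_h\dtau\theta_h\ra_h - \la\sigma\dtau\theta_h,\dtau\theta_h\ra_h)$ generated by the projected dissipation. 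Assumption (A4) is precisely tailored to absorb both of these into the kinetic contribution $\tfrac12\la\eps\dtau\theta_h,\dtau\theta_h\ra_h$, ensuring uniform coercivity of $\mathcal{E}_h^{n+1/2}$. A discrete Grönwall argument, together with the vanishing initial data in (A3), then gives $\|\dtau\theta_h\|_{\ell_\infty(L^2)} + \|\curl\wh\theta_h\|_{\ell_\infty(L^2)} \le C(E)h + C'(E)\tau^2$, and the triangle inequality with the standard Nédélec-II interpolation estimates for $\rho^n$ closes the proof of the error bound.

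The principal obstacle is the simultaneous handling of mass lumping and of the $\wPi_h$-projected dissipation: one has to trade between $\la\cdot,\cdot\ra$ and $\la\cdot,\cdot\ra_h$ and to commute $\wPi_h$ with time differencing on $\sigma$-weighted terms without losing a power of $h$, which is exactly what forces the mixed appearance of the seminorms $H^1(\Th)$ and $H^1(\Omega)$ in the constant $C(E)$: boundary and material-interface edges cannot be reduced and contribute the $H^1(\Omega)$ piece, while piecewise $H^1(\Th)$ regularity suffices elsewhere. Finally, the algebraic form~\eqref{eq:impl} is obtained by writing~\eqref{eq:method} in any basis of $V_h$; the block-diagonal structure of $\ttM_{\eps+\tau\sigma/2}$ then follows from the observation that the vertex-rule quadrature couples only basis functions sharing a common vertex of $\Th$, and a vertex-localized basis, to be constructed in Section~\ref{sec:implementation}, decouples the mass matrix into small vertex-associated blocks, while $\widehat\ttM_\sigma$ and $\ttK_\nu$ retain only the standard finite-element sparsity pattern.
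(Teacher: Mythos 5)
Your overall route coincides with the paper's: the same splitting into an interpolation part and a discrete part, the same decomposition of the consistency residual (temporal, spatial, quadrature, and $\wt\Pi_h$-projection contributions, with (A5) entering through a Lemma~\ref{lem:projprop}-type estimate), the same discrete energy with the $-\tfrac{\tau^2}{4}\|\curl\dtau\cdot\|^2_{\nu}$ and $\sigma$-mismatch corrections absorbed by the CFL condition (A4), and the same test function, since $\dtau\theta_h^{n+1/2}+\dtau\theta_h^{n-1/2}=2\,\dtau\wh\theta_h^{\,n}$. Your use of the full interpolant $\Pi_h$ instead of the reduced $\wt\Pi_h$ in the splitting is harmless: because $\wt\Pi_h\Pi_h=\wt\Pi_h$, the projection residual still takes exactly the form $\la\sigma\wt\Pi_h\dtau E^{\,k-1/2},\wt\Pi_h v_h-v_h\ra$ required by Lemma~\ref{lem:projprop}; the paper simply makes this explicit by splitting with $\wt\Pi_h$ from the start and stresses that this choice is what makes the lemma applicable.

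There is one step where the sketch as written would fail: you propose to close the estimate with a discrete Gr\"onwall argument applied to the residual, but the spatial consistency term $\sum_k\tau\la\nu\curl\rho^k,\curl\dtau\wh\theta_h^{\,k}\ra$ cannot be handled that way, because the discrete energy controls only $\|\dtau\theta_h\|_{L^2}$ and $\|\curl\wh\theta_h\|_{L^2}$, not $\|\curl\dtau\theta_h\|_{L^2}$; the CFL condition only yields $\tau\|\curl\dtau\theta_h\|\lesssim\|\dtau\theta_h\|$, which costs a factor $\tau^{-1}$ and destroys the rate. The paper resolves this by summation by parts in time, moving the difference quotient onto the interpolation error; this Abel-summation step is precisely what produces the $\|\curl\dt E\|_{L^1(H^1(\Th))}$ (and, on the time-discretization side, $\|\curl\dtt E\|_{L^1(H^1(\Th))}$) contributions in the stated constants. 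With that step added — and noting that no Gr\"onwall inequality is actually needed, since the paper instead absorbs small multiples of $\max_k\EE_h(\xi_h^k,\xi_h^{k+1})$ into the left-hand side via Young's inequality — your argument matches the paper's proof; the algebraic assertions are obtained exactly as you indicate, using the edge basis $\Phi_{ij}=\lambda_i\nabla\lambda_j$, $\Phi_{ji}=-\lambda_j\nabla\lambda_i$ and the vertex quadrature rule (Lemma~\ref{lem:alg1}).
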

The convergence results is proven in  Section~\ref{sec:proofs}, while the algebraic structure of the scheme is derived in Section~\ref{sec:implementation}.
Let us note that the assertions hold, in particular, for the choice $\wt\calE_h = \emptyset$, for which the projection $\wt\Pi_h$ drops out and the method as well as its implementation become somewhat simpler; see \cite{Radu22} for details.

\subsection*{The Yee-like scheme}

The reason for introducing the projection $\wt\Pi_h$ in Method~\ref{meth:main} lies in the following important observation, which is summarized as our second main result and leads to the reduced Yee-like scheme announced in the introduction.
\begin{theorem} \label{thm:main2}
Let the assumptions of Theorem~\ref{thm:main1} be valid and $E_h=(E_h^n)_n$ denote the solution of Method~\ref{meth:main}. 
Further define $\wt E_h^n = \wt \Pi_h E_h^n$ for all $n \ge 0$. 
Then 
\begin{align*}%
\|\dt E - \dtau \wt E_h\|_{\ell_\infty(L^2(\Omega))} &+ \|\curl (E - \widehat {\wt E}_h)\|_{\ell_\infty(L^2(\Omega))}\le C(E) h + C'(E) \tau^2, 
\end{align*}
with constants $C(E)$ and $C'(E)$  of the same form as in Theorem~\ref{thm:main1}.
Moreover, the coefficients of the solution $\wt E_h$ can be computed by the time-stepping scheme
\begin{align}\label{eq:impl2}
\dtautau \wt \ttE^n 
&= \wt \ttM_{\eps + \tau \sigma/2}^{-1} (- \wt \ttM_\sigma \dtau \wt \ttE^{n-1/2} - \wt \ttK_\nu \wt \ttE^n) + \wt \ttF^n + \wt \ttG^n,
\end{align}
and for an appropriate choice of basis for $\wt V_h$, the matrices $\wt\ttM_{\eps + \tau\sigma/2}^{-1}$, $\wt \ttM_{\sigma}$ and $\wt \ttK_\nu$ are sparse and the vectors $\wt\ttF^n$ and $\wt\ttG^n$ can be cheaply assembled from $\ttf^n$ and $\ttg^n$ in \eqref{eq:impl}.
\end{theorem}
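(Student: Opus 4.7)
The plan is to split the argument into three parts: (i) the error bounds, by comparing $\wt E_h = \wt\Pi_h E_h$ with the solution $E_h$ of Method~\ref{meth:main}, for which Theorem~\ref{thm:main1} already supplies the required estimates; (ii) the derivation of the time-stepping recursion \eqref{eq:impl2}, obtained by restricting the variational identity \eqref{eq:method} to $\wt V_h$; and (iii) the sparsity claim, which will follow from the basis construction of Section~\ref{sec:implementation}.

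For (i), I would exploit the linearity of $\wt\Pi_h$ to write $\dtau \wt E_h = \wt\Pi_h \dtau E_h$ and decompose
\begin{align*}
\dt E - \dtau \wt E_h = (\dt E - \wt\Pi_h \dt E) + \wt\Pi_h(\dt E - \dtau E_h).
\end{align*}
The first summand is of order $O(h)$ by standard interpolation estimates for the canonical projection onto $\wt V_h$, which inherits good approximation properties from the inclusions $\N_0 \subset \wt V_h \subset \NC_1$. The second is controlled by $L^2$-stability of $\wt\Pi_h$ and Theorem~\ref{thm:main1}; inserting $\Pi_h \dt E$ as an intermediate function cleanly separates a discrete error from two interpolation errors of order $h$. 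An analogous decomposition
\begin{align*}
\curl(E - \widehat{\wt E}_h) = \curl(E - \wt\Pi_h E) + \curl \wt\Pi_h(E - \widehat E_h),
\end{align*}
combined with the commuting-diagram property of $\wt\Pi_h$ with $\curl$ into the appropriate face-element space and $L^2$-stability of the corresponding face projection, gives the matching bound for the curl term, again via Theorem~\ref{thm:main1}.

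For (ii), I would test \eqref{eq:method} with $v_h = \wt v_h \in \wt V_h \subset V_h$. Because $\wt\Pi_h \wt v_h = \wt v_h$ and $\wt\Pi_h \dtau E_h = \dtau \wt E_h$, the conductivity term collapses immediately to $\la \sigma \dtau \wt E_h, \wt v_h\ra_h$. The crucial step is then to invoke the block-diagonal structure of the lumped form $\la(\eps + \tfrac{\tau}{2}\sigma)\cdot,\cdot\ra_h$: the basis of $V_h$ constructed in Section~\ref{sec:implementation} decouples $\wt V_h$ from its complement in $V_h$ with respect to this weighted inner product, so that $\la(\eps + \tfrac{\tau}{2}\sigma)\dtautau E_h, \wt v_h\ra_h = \la(\eps + \tfrac{\tau}{2}\sigma)\dtautau \wt E_h, \wt v_h\ra_h$. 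Combining this with the curl bilinear form and the source terms and rewriting the resulting identity in the chosen basis of $\wt V_h$ reproduces \eqref{eq:impl2}. Sparsity of $\wt\ttM_{\eps+\tau\sigma/2}^{-1}$ then follows because on $\wt V_h$ each diagonal block of the lumped mass matrix is a scalar, while sparsity of $\wt\ttM_\sigma$ and $\wt\ttK_\nu$ is inherited from the local support of the basis functions.

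The main obstacle is step (ii), specifically the orthogonality of the lumped inner product with weight $\eps + \tfrac{\tau}{2}\sigma$ between the mean and fluctuation components on every edge of $\wt\calE_h$. This is exactly the structural property that the basis construction of Section~\ref{sec:implementation} is engineered to achieve, and it is here that assumption (A5) becomes indispensable: a discontinuous $\sigma$ across a reduced edge would introduce off-diagonal coupling in the weighted mass matrix and obstruct an exact algebraic reduction. Once this orthogonality is in place, the remainder of the proof is a routine combination of Theorem~\ref{thm:main1}, standard interpolation estimates, and the sparsity statements from Section~\ref{sec:implementation}.
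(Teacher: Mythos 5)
Your part (i) is essentially the paper's own argument: the same splitting into the interpolation error $\eta^n$ and a projected discrete error, with the curl term handled through the commuting property \eqref{eq:curlcommuting} and the $L^2$ term through $L^2$-stability of $\wt\Pi_h$ on $V_h$ with $\Pi_h E$ as intermediate interpolant, so that Theorem~\ref{thm:main1} (or rather the bounds on $\xi_h$ from its proof) can be reused. That portion is fine.

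Part (ii), however, contains a genuine error. You propose to obtain \eqref{eq:impl2} by testing \eqref{eq:method} with $\wt v_h\in\wt V_h$ and claiming that $\wt V_h$ and its complement in $V_h$ are orthogonal with respect to $\la(\eps+\tfrac{\tau}{2}\sigma)\cdot,\cdot\ra_h$, i.e.\ $\la(\eps+\tfrac{\tau}{2}\sigma)\dtautau E_h,\wt v_h\ra_h=\la(\eps+\tfrac{\tau}{2}\sigma)\dtautau\wt E_h,\wt v_h\ra_h$. This orthogonality is false on unstructured meshes: the complement of $\wt V_h$ is spanned by the gradient bubbles $\grad(\lambda_i\lambda_j)=\Phi_{ij}-\Phi_{ji}$ for $e_{ij}\in\wt\calE_h$, and a direct evaluation of the vertex quadrature gives, per element $K$, $\la\grad(\lambda_i\lambda_j),\Phi_{ij}+\Phi_{ji}\ra_{h,K}=\tfrac{|K|}{4}\bigl(|\grad\lambda_j|^2-|\grad\lambda_i|^2\bigr)$, which does not vanish except on special (essentially structured) meshes. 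Consequently a Galerkin restriction to $\wt V_h$ neither yields a closed recursion for $\wt E_h=\wt\Pi_h E_h$ nor reproduces \eqref{eq:impl2}; this is precisely the point of the remark following Theorem~\ref{thm:main2}, which states that no closed variational characterization of \eqref{eq:impl2} exists. The paper's derivation (Lemma~\ref{lem:alg2}) is instead purely algebraic: one inverts the block-diagonal matrix $\ttM_{\eps+\tau\sigma/2}$ in \eqref{eq:impl}, multiplies from the left by $\ttR=(\ttP^\top\ttP)^{-1}\ttP^\top$, and uses that $\wh\ttM_\sigma=\ttQ^\top\ttM_\sigma\ttQ$ (by the presence of $\wt\Pi_h$ in the $\sigma$-term of Method~\ref{meth:main}) and $\ttK_\nu=\ttQ^\top\ttK_\nu\ttQ$ (by \eqref{eq:curlcommuting}), so the update depends on $\ttE^n$ only through $\wt\ttE^n=\ttR\,\ttE^n$; this produces $\wt\ttM_{\eps+\tau\sigma/2}^{-1}=\ttR\,\ttM_{\eps+\tau\sigma/2}^{-1}\ttR^\top$, which is sparse because $\ttM^{-1}_{\eps+\tau\sigma/2}$ is block-diagonal, but which is \emph{not} the inverse of the lumped Galerkin mass matrix on $\wt V_h$, contrary to what your sparsity argument assumes. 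Two further misattributions follow from the same misconception: assumption (A5) plays no role in the algebraic reduction (it is needed only for the consistency estimate of Lemma~\ref{lem:projprop} in the error analysis; the reduction works for any $\wt\calE_h$), and your route would assemble the sources directly on $\wt V_h$, which is the alternative choice \eqref{eq:rhs2} of Remark~\ref{rem:rhs} requiring the stronger condition (A5$^*$) and in general not reproducing $\wt\Pi_h E_h^n$, whereas the theorem's $\wt\ttF^n$, $\wt\ttG^n$ are built from the full-space vectors $\ttf^n$, $\ttg^n$ via $\ttR\,\ttM_{\eps+\tau\sigma/2}^{-1}$.
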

\begin{remark}
If $\sigma=0$,
we may choose $\wt\calE_h =\calE_h$ and obtain an explicit time-stepping method for Maxwell's equations with exactly one degree of freedom per edge.
For orthogonal grids and homogeneous data $f,g\equiv 0$, the presented approach becomes equivalent to the Yee scheme; see \cite{EggerRadu20c_maxwellyee} for details. 
\end{remark}
The two assertions of Theorem~\ref{thm:main2} are again proven in the following two sections. 
\begin{remark}
Let us emphasize that the matrix $\wt \ttM_{\eps + \tau \sigma/2}$ in \eqref{eq:impl2} has a sparse inverse, but it is not a sparse matrix by itself. 
In contrast to \eqref{eq:impl}, which corresponds to Method~\ref{meth:main}, we can not give a variational characterization of the scheme \eqref{eq:impl2} in closed form. 
This poses a severe challenge for the analysis of this method which can be overcome only by a somewhat non-standard analysis. 
\end{remark}

\section{Proof of convergence rates}\label{sec:proofs}

In this section, we establish the convergence rates stated in Theorem~\ref{thm:main1} and \ref{thm:main2}. To be able to do so, we require a couple of auxiliary results, which are stated first.

\subsection{Projection operators} 

For later reference, we collect some well-known properties of projection operators arising in our error analysis below. 
\begin{lemma}
\label{lem:proj}
Let (A1) hold, $\wt\calE_h \subset \calE_h$, and $\Pi_h$, $\wt \Pi_h$ be defined as in Section~\ref{sec:prelim}. Then 
\begin{alignat}{2}\label{eq:projest1}
\begin{split}
\|\E-\wPi_h\E\|_{L^2(\K)}&\leq C h \|\E\|_{H^{1}(\K)}, \\
\|\curl(\E-\wPi_h \E)\|_{L^2(\K)} &\le C h \|\curl\E\|_{H^{1}(\K)},
\end{split}
\end{alignat}
for all $E \in H^1(\Th)$ and $K \in \Th$ with a constant $C$ depending only on $\gamma,\delta$ in assumption~(A1).
The same estimates also hold for the projection operator $\Pi_h$.
\end{lemma}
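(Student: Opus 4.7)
The proof is a standard local Bramble--Hilbert / scaling argument applied element-by-element. The plan is: (i) transform to a fixed reference tetrahedron $\wh K$ via an affine map $F_K$ and the tangential (covariant) Piola transform, so that the interpolants commute with pullback and all geometric constants depend only on $\gamma, \delta$ from (A1); (ii) prove boundedness and polynomial preservation of $\wt \Pi$ and $\Pi$ on $\wh K$; (iii) apply Bramble--Hilbert and scale back to $K$.

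For the $L^2$ estimate, boundedness $\|\wt \Pi v\|_{L^2(\wh K)} \le C\|v\|_{H^1(\wh K)}$ follows because each degree of freedom $\int_e v \cdot \tau_e\, p_e\, ds$ is continuous on $H^1(\wh K)^3$ by the edge trace theorem, and the reference basis of $\wt V_h|_{\wh K}$ is fixed. Since $\N_0(\wh K) \subset \wt V_h|_{\wh K}$, unisolvence yields $\wt \Pi c = c$ for every constant $c \in \RR^3$. A standard Bramble--Hilbert argument then gives $\|v - \wt \Pi v\|_{L^2(\wh K)} \le C|v|_{H^1(\wh K)}$, and pulling back by $F_K$ using the scaling bounds from (A1) produces the first estimate with the factor $h$.

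For the curl estimate, the key is a discrete commuting relation. For each face $F \subset \partial K$, Stokes' theorem gives
\[
\int_F \curl \wt \Pi_h E \cdot n_F\, ds \,=\, \oint_{\partial F} \wt \Pi_h E \cdot \tau\, ds \,=\, \oint_{\partial F} E \cdot \tau\, ds \,=\, \int_F \curl E \cdot n_F\, ds,
\]
where the middle equality uses that $\wt \Pi_h$ preserves edge \emph{averages} on every edge (the test function $p_e \equiv 1$ is always included among the degree-of-freedom functionals). Since $\wt \Pi_h E|_K \in P_1(K)^3$, the field $\curl \wt \Pi_h E|_K$ is a constant vector, and the four face-normal flux conditions (any three of the tetrahedron's face normals are linearly independent) determine it uniquely. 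This exhibits $\curl \wt \Pi_h E|_K = Q_K(\curl E|_K)$, where $Q_K : H^1(K)^3 \to P_0(K)^3$ is the face-flux interpolant; $Q_K$ is bounded via the trace theorem and reproduces constants, so a second Bramble--Hilbert argument gives $\|\curl E - Q_K \curl E\|_{L^2(K)} \le C h \|\curl E\|_{H^1(K)}$, which is the desired bound. The estimates for $\Pi_h$ follow by exactly the same argument, since its degrees of freedom form a superset of those of $\wt \Pi_h$ and polynomial preservation is only stronger.

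The main obstacle is essentially the single point that reducing from two to one degree of freedom per edge on $\wt \calE_h$ should not spoil the curl approximation. The Stokes identity above resolves this cleanly: the face-normal flux of $\curl$ depends only on edge \emph{averages}, which are matched by $\wt \Pi_h$ independently of the choice of $\wt \calE_h$. Every other step is routine and can be found e.g.\ in \cite{BoffiBrezziFortin13,Monk03}.
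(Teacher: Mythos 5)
Your overall strategy (map to a reference element, establish boundedness plus preservation of a polynomial subspace, apply Bramble--Hilbert, scale back) is exactly the standard route, and it is essentially what the paper relies on: its proof of this lemma is a citation of the arguments in \cite[Sec.~2.5]{BoffiBrezziFortin13}. Your treatment of the curl estimate is correct and clean: the Stokes identity shows that $\curl \wPi_h E|_\K$ is the constant vector matching the face fluxes of $\curl E$, i.e.\ a commuting-diagram property $\curl\circ\wPi_h = Q_K\circ\curl$ with the lowest-order face-flux interpolant $Q_K$, and since only edge \emph{averages} enter, this holds for $\wPi_h$ just as for $\Pi_h$, independently of the choice of $\wt\calE_h$. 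This is indeed the reason the reduction is harmless for the curl part, and it is the same mechanism the paper uses in \eqref{eq:curlcommuting}.

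The $L^2$ estimate, however, contains a genuine gap. In three dimensions the edge functionals $\int_e v\cdot\tau_e\,p_e\,ds$ are \emph{not} continuous on $H^1(\wh K)^3$: edges have codimension two, $H^1$ functions possess no trace on them (a segment has zero $H^1$-capacity in $\RR^3$, and one can build smooth $v_n$ with $\|v_n\|_{H^1(\wh K)}\to 0$ while $\int_e v_n\cdot\tau_e\,ds\to\infty$). Hence there is no ``edge trace theorem'' yielding $\|\wPi v\|_{L^2(\wh K)}\le C\|v\|_{H^1(\wh K)}$, and the Bramble--Hilbert argument cannot be launched from that bound; the claimed inequality fails even on smooth functions with a uniform constant. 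This is precisely the notorious delicacy of edge-element interpolation which the cited reference handles differently: there the degrees of freedom are shown to be well defined and bounded only under stronger hypotheses (for instance $v\in H^1(\K)^3$ with $\curl v\in L^p(\K)^3$, $p>2$, or $v\in H^{1/2+\delta}(\curl;\K)$), via an integration-by-parts argument on faces rather than a trace theorem, and the resulting reference-element bound carries a curl contribution. To repair your argument you would either invoke that machinery, so that the stability estimate reads $\|\wPi v\|_{L^2(\wh K)}\le C\bigl(\|v\|_{H^1(\wh K)}+\|\curl v\|_{L^p(\wh K)}\bigr)$ and is then combined with your (correct) curl argument, or restrict attention to fields smooth enough for the degrees of freedom to make sense, as the paper implicitly does. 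In two dimensions your trace argument would be unobjectionable, since the edge moments there live on codimension-one facets, but the lemma is stated for tetrahedra. The remaining ingredients (reproduction of constants via $\N_0(\wh K)\subset \wt V_h|_{\wh K}$, uniformity of constants over the finitely many local configurations of reduced edges, and the superset argument for $\Pi_h$) are fine.
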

The proof of the assertions follows from the arguments given in \cite[Sec.~2.5]{BoffiBrezziFortin13}.
In our analysis, we will also make use of the $L^2$-orthogonal projection $\pi^0_\omega : L^2(\omega) \to P_0(\omega)$ to constants for certain subsets $\omega \subset \Omega$, which is defined by %
\begin{align} \label{eq:l2proj}
\int_\omega \pi^0_\omega v \, dx = \int_\omega v \, dx. 
\end{align}
The same symbol will also be used for the projection of vector-valued functions. 
\begin{lemma} \label{lem:proj2}
Let (A1) hold and $\omega=K$ for $K \in \Th$ or $\omega=\bigcup_{K \cap e =e} K$ for $e \in \calE_h$. Then the projection error can be estimated by
\begin{align}
\|v - \pi^0_\omega v\|_{L^2(\omega)} \le C h \|v\|_{H^1(\omega)}
\end{align}
for all $v \in H^1(\omega)$ with constant $C$ depending only on $\gamma,\delta$ in assumption (A1). 
\end{lemma}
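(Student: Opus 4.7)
The plan is to reduce both cases to a standard Poincaré--Wirtinger (Deny--Lions) inequality, exploiting that $\pi^0_\omega v$ is the $L^2$-orthogonal projection onto constants and therefore coincides with the average $\bar v := |\omega|^{-1} \int_\omega v\,dx$. Since for any constant $c$ one has $\|v - \bar v\|_{L^2(\omega)} \le \|v - c\|_{L^2(\omega)}$, the target inequality will follow from a Poincaré--Wirtinger inequality on $\omega$ with a constant that scales linearly in $h$ and is otherwise controlled only by the shape-regularity constants $\gamma,\delta$ from (A1).

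For the case $\omega = K$, I would run the classical scaling argument: map $K$ affinely to a fixed reference tetrahedron $\widehat K$ via $F_K(\widehat x) = B_K \widehat x + b_K$ with $\|B_K\| \lesssim h$ and $\|B_K^{-1}\| \lesssim h^{-1}$ (these bounds follow from $\gamma h^3 \le |K|$ and $h_K \le h$). Apply Poincaré--Wirtinger on the fixed reference element $\widehat K$ with an absolute constant, then scale back; the determinant factors cancel in the correct way to yield
\begin{align*}
\|v - \bar v\|_{L^2(K)} \le C h \, \|\nabla v\|_{L^2(K)} \le C h \, \|v\|_{H^1(K)},
\end{align*}
with $C$ depending only on $\gamma,\delta$.

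For the edge-patch case $\omega = \bigcup_{K \cap e = e} K$, the obstacle is that there is no single reference configuration, since the number and relative orientation of the tetrahedra sharing $e$ vary across the mesh. The key geometric inputs, both consequences of (A1), are: (i) the number of elements in the patch is bounded by a constant depending only on $\gamma,\delta$; (ii) the patch is connected (it shares the edge $e$), has diameter comparable to $h$, and is a Lipschitz domain whose shape, after rescaling by $h^{-1}$, belongs to a family of uniformly Lipschitz domains. From (ii) one gets a uniform Poincaré--Wirtinger constant on the unit-scale patch (for instance by a compactness/contradiction argument over the compact family of admissible rescaled patch shapes, or by verifying directly that each such patch is star-shaped with respect to a ball of radius comparable to its diameter), and scaling gives the factor $h$. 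An alternative, more hands-on route that avoids the compactness step is a chain argument: use the single-element estimate on each $K \subset \omega$, then control the differences $|\bar v_K - \bar v_{K'}|$ of element averages for neighbouring elements by a trace inequality across the common face, summing over the uniformly bounded patch. Either route produces the claimed bound.

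I expect the edge-patch case to be the only non-routine part of the proof; the element case is standard Bramble--Hilbert. The subtlety there lies exclusively in showing that the Poincaré--Wirtinger constant can be taken to depend only on $\gamma,\delta$, which is where quasi-uniformity and shape-regularity really enter. Once that is in place, the stated estimate is immediate from $\|v - \pi^0_\omega v\|_{L^2(\omega)} \le C_{\mathrm{PW}}(\omega)\,\|\nabla v\|_{L^2(\omega)}$ together with $C_{\mathrm{PW}}(\omega) \le C(\gamma,\delta)\, h$.
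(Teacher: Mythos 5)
Your proposal is correct and essentially matches the paper's (very brief) argument: the paper also reduces the estimate to a Poincar\'e--Friedrichs/Wirtinger inequality combined with standard scaling, and it settles the uniformity of the constant on edge patches exactly via the route you list as one option, namely that by (A1) the sets $\omega$ are uniformly star-shaped with respect to balls of size $h$ (citing Farwig--Rosteck and Brenner--Scott). Your alternative suggestions (compactness over rescaled patch shapes, or a chaining argument across faces) are viable but not needed; the star-shapedness argument is the one the paper uses.
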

The proof of this assertion is based on the Poicar\'e--Friedrichs inequality \cite[Thm.~1.1]{FarwigRosteck2016} and standard scaling arguments; also see~\cite[Ch.~4]{BrennerScott94}. 
Here we use that the sets $\omega$ appearing in the lemma are uniformly star-shaped with respect to balls of size $h$, which follows from assumption (A1) on the mesh, and hence $C$ is universal.  

We will further write $\pi^0_h : L^2(\Omega) \to P_0(\Th)$ for the projection to piecewise constants over the mesh $\Th$, defined by $(\pi^0_h v)|_K = \pi^0_\K(v|_K)$, and note that 
\begin{align} \label{eq:proj0}
\|v - \pi^0_h v\|_{L^2(K)} \le C h \|v\|_{H^1(K)},
\end{align}
which follows immediately from the assertion of the previous lemma.

\subsection{Properties of the quadrature rule}

As a second ingredient, we now state some elementary facts about the quadrature rule introduced in Section~\ref{sec:prelim}. 
\begin{lemma} \label{lem:equiv}
Let (A1) hold and $\alpha \in P_0(\Th)$ with $\alpha \ge 0$. Then 
\begin{align}
c\,\la\alpha v_h,v_h\ra \le \la\alpha v_h,v_h\ra_h \le C \, \la\alpha v_h,v_h\ra \quad \forall v_h\in V_h,
\end{align}
with uniform constants $c,C>0$ depending only on the bounds in assumption (A1). 
\end{lemma}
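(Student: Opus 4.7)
The plan is to reduce the global equivalence to a single-element estimate and then to a finite-dimensional norm equivalence on a reference tetrahedron. Since $\alpha \in P_0(\Th)$ is piecewise constant with $\alpha_K := \alpha|_K \ge 0$, both bilinear forms decompose as element sums,
\begin{align*}
\la \alpha v_h, v_h \ra = \sum_{K \in \Th} \alpha_K \, \|v_h\|_{L^2(K)}^2, \qquad \la \alpha v_h, v_h \ra_h = \sum_{K \in \Th} \alpha_K \, Q_K(v_h|_K),
\end{align*}
where $Q_K(v) := \tfrac{|K|}{4} \sum_{v_i \in K} |v(v_i)|^2$ denotes the vertex-rule quadrature applied on $K$. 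It therefore suffices to establish, with uniform constants $c,C>0$, the pointwise element bound
\begin{align*}
c \, \|v\|_{L^2(K)}^2 \le Q_K(v) \le C \, \|v\|_{L^2(K)}^2 \qquad \forall v \in P_1(K)^3, \ K \in \Th.
\end{align*}

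Next, I would pull back to a fixed reference tetrahedron $\hat K$ via an affine bijection $F_K : \hat K \to K$ that maps vertices to vertices, and set $\hat v := v \circ F_K \in P_1(\hat K)^3$. The change of variables gives $\|v\|_{L^2(K)}^2 = (|K|/|\hat K|) \, \|\hat v\|_{L^2(\hat K)}^2$, and since $F_K$ permutes the four vertices, $Q_K(v) = (|K|/|\hat K|) \, \hat Q(\hat v)$ with $\hat Q(\hat v) := \tfrac{|\hat K|}{4} \sum_i |\hat v(\hat v_i)|^2$. The geometric factor cancels and the claim reduces to the equivalence of $\|\cdot\|_{L^2(\hat K)}^2$ and $\hat Q(\cdot)$ on $P_1(\hat K)^3$.

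Both quantities are quadratic forms on the same finite-dimensional vector space. Moreover, $\hat Q(\hat v)=0$ forces $\hat v$ to vanish at all four vertices of $\hat K$, and an affine function on a tetrahedron vanishing at its four vertices is identically zero; hence $\hat Q$ is positive definite on $P_1(\hat K)^3$. Finite-dimensional norm equivalence then yields universal constants $c,C$, independent of $K$, which transfer back without deterioration because the Jacobian factor $|K|/|\hat K|$ cancels exactly on both sides. There is essentially no obstacle beyond bookkeeping: the shape-regularity and quasi-uniformity in (A1) are not actually invoked, so the resulting constants depend only on the reference element; the statement is phrased in terms of (A1) purely for consistency with the surrounding analysis.
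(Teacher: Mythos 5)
Your proof is correct and follows essentially the same route as the paper's: an element-wise splitting using that $\alpha$ is piecewise constant and nonnegative, pull-back to a reference tetrahedron, and finite-dimensional norm equivalence of the vertex-rule quadrature with the $L^2$-norm on $P_1(\hat K)^3$. Your additional observation that the Jacobian factors cancel exactly, so the constants depend only on the reference element rather than on shape regularity, is a valid (slightly sharper) refinement of the paper's statement.
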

\begin{proof}
We consider a single element $K \in \Th$ and abbreviate $\la a,b\ra_K=\int_K a \cdot b \, dx$ and $\la a,b\ra_{h,K} = \frac{|K|}{4} \sum_i a(v_i) \cdot b(v_i)$. By mapping to the reference element, using the finite dimensionality of $P_1(K)^3$, and noting that $\alpha \ge 0$ is piecewise constant, we get 
\begin{align*}
c \, \la\alpha v_h,v_h\ra_K \le \la\alpha v_h,v_h\ra_{h,K} \le C \, \la\alpha v_h,v_h\ra_K,
\end{align*}
with constants $c,C$ that only depend on the shape regularity of the element $K$. 
The assertion of the lemma then follows by summation over all elements $K \in \Th$.
\end{proof}
As a direct consequence of the previous result, we obtain the following assertions:
\begin{corollary} \label{cor:equiv}
Let (A1) hold.
Then $\|v_h\|_h^2=\la v_h,v_h\ra_h$ defines a norm on $V_h$ and
\begin{align}
c_1 \|v_h\|_{L^2(\Omega)} \le \|v_h\|_h^2 \le c_2 \|v_h\|_{L^2(\Omega)}^2 \quad \forall v_h \in V_h.
\end{align}
Further let $\alpha \in P_0(\Th)$ with $0 < \underline \alpha \le \alpha \le \overline \alpha$. 
Then $\la\alpha u_h,v_h\ra_h$ defines a continuous and elliptic symmetric bilinear form on $V_h$, more precisely
\begin{align*}
c_1 \underline \alpha \|u_h\|_{L^2(\Omega)}^2 \le \la\alpha u_h,u_h\ra_h \quad \text{and} \quad 
\la\alpha u_h,v_h\ra_h \le c_2 \overline \alpha \|u_h\|_{L^2(\Omega)} \quad \forall u_h,v_h \in V_h.
\end{align*}
\end{corollary}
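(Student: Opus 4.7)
The plan is to derive both parts of the corollary as direct consequences of Lemma~\ref{lem:equiv}, applied once with $\alpha\equiv 1$ and once with the given piecewise-constant coefficient.

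For the norm equivalence, I would set $\alpha \equiv 1 \in P_0(\Th)$ in Lemma~\ref{lem:equiv}; its conclusion is precisely the claimed two-sided bound between $\|v_h\|_h^2$ and $\|v_h\|_{L^2(\Omega)}^2$. To check that $\|\cdot\|_h$ is a genuine norm on $V_h$, I would observe that $\la\cdot,\cdot\ra_h$ is a symmetric, positive semidefinite bilinear form by construction, being a nonnegative combination of pointwise inner products evaluated at the vertices of the elements. Homogeneity is then immediate, and the triangle inequality follows from the Cauchy--Schwarz inequality valid for any such form; definiteness on $V_h$ follows from the lower bound combined with the fact that $\|\cdot\|_{L^2(\Omega)}$ is already a norm on $V_h$.

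For the weighted quadrature form $\la\alpha u_h,v_h\ra_h$, symmetry is evident from the definition. Ellipticity I would obtain by chaining the lower bound of Lemma~\ref{lem:equiv} with the pointwise estimate $\alpha \ge \underline\alpha$, yielding $\la\alpha u_h,u_h\ra_h \ge c\,\la\alpha u_h,u_h\ra \ge c\,\underline\alpha\,\|u_h\|_{L^2(\Omega)}^2$. Continuity I would obtain symmetrically: use $\alpha \le \overline\alpha$ elementwise to majorize $\la\alpha u_h,v_h\ra_h \le \overline\alpha\,\la u_h,v_h\ra_h$, then apply Cauchy--Schwarz in the semidefinite form $\la\cdot,\cdot\ra_h$, and finally invoke the upper bound of the first part on both factors to arrive at a bound of the form $c_2\,\overline\alpha\,\|u_h\|_{L^2(\Omega)}\|v_h\|_{L^2(\Omega)}$ (which is the natural reading of the displayed inequality, where the factor $\|v_h\|_{L^2(\Omega)}$ appears to be missing from the statement).

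Since Lemma~\ref{lem:equiv} supplies all the analytic content, there is essentially no obstacle here; the only subtlety worth noting is that Cauchy--Schwarz must be invoked for $\la\cdot,\cdot\ra_h$ \emph{before} it has been identified as a genuine inner product, but this is harmless because the inequality holds for any symmetric positive semidefinite bilinear form.
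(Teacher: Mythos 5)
Your proposal follows the same route the paper intends: the paper offers no written proof and treats the corollary as an immediate consequence of Lemma~\ref{lem:equiv}, which is exactly how you argue, and your reading of the two obvious typos in the statement (the missing square on the left of the first display and the missing factor $\|v_h\|_{L^2(\Omega)}$ in the continuity bound) is correct. The only step that does not work as literally written is the elementwise majorization $\la\alpha u_h,v_h\ra_h \le \overline\alpha\,\la u_h,v_h\ra_h$: the vertex contributions $u_h(v_i)\cdot v_h(v_i)$ may be negative, so multiplying $\alpha$ up to $\overline\alpha$ need not increase the sum. This is harmless to fix: either apply Cauchy--Schwarz first to the positive semidefinite form $\la\alpha\,\cdot,\cdot\ra_h$ and then bound each diagonal factor by $\overline\alpha$ and the upper estimate of the first part, or insert absolute values in the vertex sums before comparing $\alpha$ with $\overline\alpha$; with that reordering your argument is complete and coincides in substance with the paper's.
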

These properties immediately imply the well-posedness of Method~\ref{meth:main}.
As a next ingredient for our analysis, we analyze the quadrature error. 
\begin{lemma}\label{lem:quaderror}
Let assumption (A1) hold and let 
\begin{align*}
\mysigma_{h}(\alpha u_h,v_h)\coloneqq \la\alpha u_h,v_h\ra_h - \la\alpha u_h,v_h\ra
\end{align*}
denote the quadrature error for some $\alpha \in P_0(\Th)$. 
Then
\begin{align*}
|\mysigma_{h}(\alpha \wPi_h \u,\v_h)| \le C h \|u\|_{H^1(\Th)}\|v_h\|_{L^2(\Omega)}
\qquad \forall u \in H^1(\Th)^3, v_h \in V_h,
\end{align*}
with constant $C$ depending only on $\|\alpha\|_{L^\infty(\Omega)}$ and the constants in assumption (A1).
\end{lemma}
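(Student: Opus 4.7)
The plan is to work element-by-element, exploit the fact that the vertex rule is exact for piecewise linears, and subtract off a suitable constant approximation of $\Pi_h u$ on each $K$ so that what remains is small in $L^2$.

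First I would decompose $\mysigma_h(\alpha \Pi_h u,v_h) = \sum_K \bigl(\la\alpha\Pi_h u,v_h\ra_{h,K} - \la\alpha\Pi_h u,v_h\ra_K\bigr)$, where $\la\cdot,\cdot\ra_{h,K}$ and $\la\cdot,\cdot\ra_K$ denote the element contributions to the discrete and continuous inner products. Since $\alpha|_K = \alpha_K$ is constant by (A2), and $\pi^0_K \Pi_h u$ is a constant vector on $K$, the product $\alpha_K (\pi^0_K\Pi_h u)\cdot v_h$ lies in $P_1(K)$, hence is integrated exactly by the vertex rule. Therefore
\[
\la\alpha\Pi_h u,v_h\ra_{h,K} - \la\alpha\Pi_h u,v_h\ra_K
= \la\alpha w_h,v_h\ra_{h,K} - \la\alpha w_h,v_h\ra_K,
\]
with $w_h := \Pi_h u - \pi^0_K\Pi_h u \in P_1(K)^3$.

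Next, I would apply Cauchy--Schwarz twice. For the continuous part the usual $L^2$ Cauchy--Schwarz gives $|\la\alpha w_h,v_h\ra_K|\le \alpha_K\|w_h\|_{L^2(K)}\|v_h\|_{L^2(K)}$. For the discrete part, $\la\alpha\cdot,\cdot\ra_{h,K}$ is a positive semidefinite symmetric form on $P_1(K)^3$, so Cauchy--Schwarz in that form together with Corollary~\ref{cor:equiv} (norm equivalence on $V_h$) yields the same bound up to a constant depending only on the shape regularity and $\|\alpha\|_{L^\infty}$.

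It then remains to bound $\|w_h\|_{L^2(K)} = \|\Pi_h u - \pi^0_K\Pi_h u\|_{L^2(K)}$. By the triangle inequality and the $L^2$-contractivity of $\pi^0_K$,
\[
\|w_h\|_{L^2(K)} \le \|\Pi_h u - u\|_{L^2(K)} + \|u - \pi^0_K u\|_{L^2(K)} + \|\pi^0_K(u-\Pi_h u)\|_{L^2(K)} \le 2\|\Pi_h u - u\|_{L^2(K)} + \|u - \pi^0_K u\|_{L^2(K)},
\]
and Lemmas~\ref{lem:proj} and \ref{lem:proj2} both give $O(h\|u\|_{H^1(K)})$. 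Summing over $K$ and applying discrete Cauchy--Schwarz in the index $K$ yields the claimed estimate with constant depending only on $\|\alpha\|_{L^\infty(\Omega)}$ and the constants in (A1).

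The argument is essentially routine; the only nuance is the initial reduction step, where one must notice that only the \emph{non-constant part} of $\Pi_h u$ on each element contributes to the quadrature error, which is precisely what permits the $O(h)$ gain.
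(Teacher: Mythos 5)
Your proof is correct and takes essentially the same route as the paper: both arguments kill the constant part of the interpolant using exactness of the vertex rule on $P_1(K)$ (with $\alpha$ elementwise constant), and bound the remaining non-constant part via Cauchy--Schwarz, the quadrature norm equivalence of Corollary~\ref{cor:equiv}, and the projection error estimates of Lemmas~\ref{lem:proj} and~\ref{lem:proj2}. The only immaterial differences are that the paper subtracts $\pi^0_h u$ rather than $\pi^0_K \Pi_h u$ before invoking the triangle inequality, and that the statement concerns $\wt\Pi_h$ rather than $\Pi_h$, for which your argument applies verbatim since Lemma~\ref{lem:proj} covers both projectors.
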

\begin{proof}
We define the local error $\mysigma_{h,\K}(\alpha u_h,v_h) \coloneqq \la\alpha u_h,v_h\ra_{h,K}-\la\alpha u_h,v_h\ra_K$, and split
\begin{align*}
|\mysigma_{h,\K}(\alpha \wPi_h \u,\v_h)| &\le |\alpha \mysigma_{h,\K}(\pi^0_h\u,\v_h)| + |\alpha \mysigma_{\K}(\wPi_h \u-\pi^0_h\u,v_h)| = (i) + (ii),
\end{align*}
where we used that $\alpha$ is piecewise constant. 
Since the quadrature rule integrates linear polynomials exactly, we obtain $(i)=0$. 
The second term can again be bounded elementwise. We may therefore omit $\alpha$ and obtain 
\begin{align*}
|\mysigma_{\K}(\wPi_h \u-\pi^0_h\u,\v_h)|&\le (1+c_1) \|\wPi_h \u-\pi^0_\K\u\|_{L^2(\K)}\|\v_h\|_{L^2(\K)}\\
&\le C \big(\|\u-\pi^0_h\u\|_{L^2(\K)} + \|\u-\wPi_h \u\|_{L^2(\K)}\big)\|\v_h\|_{L^2(\K)} \\
& \le C'h\, \|u\|_{H^1(\K)}\|\v_h\|_{L^2(\K)}.
\end{align*}
Here we used the assertions of Corollary~\ref{cor:equiv} and the Cauchy-Schwarz inequality in the first step, and the projection error estimates of Lemma~\ref{lem:proj} and \ref{lem:proj2} in the last.
Scaling by the constant $\alpha$ and summation over all elements leads to the assertion.
\end{proof}

\subsection{Estimates for the loss term}

We now present a particular approximation property, which explains why the degree of approximation can be reduced on the edges in the set $\wt\calE_h$ satisfying assumption (A5) without decreasing the accuracy. 
\begin{lemma} \label{lem:basis}
Let $\lambda_i \in P_1(\Th) \cap H^1(\Omega)$ denote the barycentric coordinates defined by $\lambda_i(v_j)=\delta_{ij}$ for all vertices $v_j$ of the mesh. 
To every edge $e_{ij} \in \calE_h$, we define \begin{align} \label{eq:basis}
\Phi_{ij} = \lambda_i \nabla \lambda_j \qquad \text{and} \qquad \Phi_{ji} = -\lambda_j \nabla \lambda_i. 
\end{align}
These basis functions are linearly independent and  $V_h = \operatorname{span}\{\Phi_{ij},\Phi_{ji} : e_{ij} \in \calE_h\}$. 
Hence the functions defined in \eqref{eq:basis} 
comprise a basis for $V_h$.
\end{lemma}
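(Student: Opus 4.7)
The plan is to verify first that each $\Phi_{ij}$ lies in $V_h$, then to compute its tangential traces on edges, and finally to conclude by a direct dimension count against the canonical degrees of freedom of $V_h$.

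\emph{Step 1 (Inclusion $\Phi_{ij}\in V_h$).} On each $K\in\Th$, $\lambda_i|_K$ is linear and $\nabla\lambda_j|_K$ is constant, so $\Phi_{ij}|_K \in P_1(K)^3$. For tangential continuity across an interior face $F=K_1\cap K_2$, I will argue that $\lambda_i\in H^1(\Omega)$ is continuous on $F$, while the tangential part of $\nabla\lambda_j$ along $F$ coincides with the surface gradient $\nabla_F(\lambda_j|_F)$, which depends only on the trace of $\lambda_j$ on $F$. Since this trace is single-valued, the tangential component of $\Phi_{ij}$ agrees from both sides of $F$, so $\Phi_{ij}\in H(\curl;\Omega)$ and hence $\Phi_{ij}\in V_h$.

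\emph{Step 2 (Edge traces).} Fix $e_{kl}\in\calE_h$ with unit tangent $\tau_{kl}$ pointing from $v_k$ to $v_l$, and set $h_{kl}=|v_l-v_k|$. The identity $\nabla\lambda_j\cdot(v_l-v_k)=\lambda_j(v_l)-\lambda_j(v_k)=\delta_{jl}-\delta_{jk}$ yields
\begin{align*}
\Phi_{ij}\cdot\tau_{kl}\big|_{e_{kl}}=\frac{\lambda_i\,(\delta_{jl}-\delta_{jk})}{h_{kl}}\bigg|_{e_{kl}}.
\end{align*}
Since $\lambda_i$ vanishes identically on $e_{kl}$ unless $i\in\{k,l\}$, only $\Phi_{kl}$ and $\Phi_{lk}$ have nonzero tangential trace on $e_{kl}$. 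Parameterizing the edge by $t=\lambda_l\in[0,1]$, a direct computation gives $\Phi_{kl}\cdot\tau_{kl}=(1-t)/h_{kl}$ and $\Phi_{lk}\cdot\tau_{kl}=t/h_{kl}$, so these two traces form a basis of $P_1(e_{kl})$.

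\emph{Step 3 (Linear independence and spanning).} Suppose $\sum_{e_{ij}\in\calE_h}(c_{ij}\Phi_{ij}+c_{ji}\Phi_{ji})=0$ on $\Omega$. Taking the tangential trace on an arbitrary edge $e_{kl}$ and using Step 2 gives $c_{kl}(1-t)/h_{kl}+c_{lk}t/h_{kl}\equiv 0$ on $[0,1]$, whence $c_{kl}=c_{lk}=0$. The $2|\calE_h|$ functions $\{\Phi_{ij},\Phi_{ji}\}_{e_{ij}\in\calE_h}$ are therefore linearly independent in $V_h$. Since the canonical degrees of freedom of $V_h$ consist of two tangential moments per edge, $\dim V_h=2|\calE_h|$ matches the cardinality of this family, and linear independence forces it to span $V_h$.

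The only mildly delicate point is the tangential-continuity check in Step 1; the rest reduces to exploiting the identity $\nabla\lambda_j\cdot(v_l-v_k)=\delta_{jl}-\delta_{jk}$ together with the observation that $\lambda_i$ vanishes on any edge not containing $v_i$.
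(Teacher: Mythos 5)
Your proof is correct. Note that the paper itself gives no argument for this lemma at all---it simply defers to N\'ed\'elec (1986) and Boffi--Brezzi--Fortin---so what you have written is a legitimate self-contained version of exactly the ``considerations'' being cited: tangential continuity of $\lambda_i\nabla\lambda_j$ from the continuity of $\lambda_i$ and the single-valuedness of the tangential part of $\nabla\lambda_j$ on faces, the edge-trace computation via $\nabla\lambda_j\cdot(v_l-v_k)=\delta_{jl}-\delta_{jk}$ showing that on $e_{kl}$ only $\Phi_{kl},\Phi_{lk}$ survive with traces $\lambda_k/h_{kl}$ and $\lambda_l/h_{kl}$ spanning $P_1(e_{kl})$, and then linear independence by testing on each edge. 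The only ingredient you import without proof is $\dim V_h = 2|\calE_h|$, i.e.\ the unisolvence of the two tangential moments per edge for the second-family N\'ed\'elec element; this is precisely the standard fact the paper already assumes when it introduces the canonical interpolation operator $\Pi_h$, so using it is consistent with the paper's framework and not circular. If you wanted to avoid even that, you could close the spanning step directly: given $v_h\in V_h$, define coefficients from its two tangential edge moments and check, using your trace formulas, that the difference has vanishing canonical degrees of freedom on every element and hence vanishes---but as written the dimension-count route is fine.
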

The assertion follows immediately from the considerations in \cite{BoffiBrezziFortin13,Nedelec86}. 
In the subsequent analysis, we will also make use of the following norm equivalence.
\begin{lemma}\label{lem:normeq}
Let (A1) hold and let $v_h \in V_h$, i.e., $v_h=\sum_{e_{ij}\in\calE_h}\ttv_{ij}\Phi_{ij}+\ttv_{ji}\Phi_{ji}$ for appropriate coefficients $\ttv_{ij},\ttv_{ji} \in \RR$.
Further let $\omega(e_{ij}) = \bigcup_{K \cap e_{ij} = e_{ij}} K$ denote the patch of elements containing the edge $e_{ij}$. 
Then
\begin{align*}
\vertiii{v_h}_h^2\coloneqq \sum\nolimits_{e_{ij}\in\calE_h} \ttv_{ij}^2\|\Phi_{ij}\|^2_{L^2(\omega(e_{ij}))} + \ttv_{ji}^2\|\Phi_{ji}\|^2_{L^2(\omega(e_{ij}))}
\end{align*}
defines a norm on $V_h$ which is equivalent to $\|\cdot\|_{L^2(\Omega)}$. More precisely, one has 
\begin{align*}
c_1' \|v_h\|_{L^2(\Omega)} \le \vertiii{v_h}_h \le c_2' \|v_h\|_{L^2(\Omega)} \quad \forall v_h \in V_h
\end{align*}
with uniform constants $c_1',c_2'>0$ depending only on the bounds in assumption (A1).
\end{lemma}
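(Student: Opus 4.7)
\emph{Proof plan.} The plan is to reduce the asserted norm equivalence to a statement on each single element and then to assemble. For any $K \in \Th$, the basis function $\Phi_{ij}$ vanishes on $K$ unless $e_{ij}$ is one of the six edges of $K$, so $v_h|_K$ is determined by only twelve coefficients. By Lemma~\ref{lem:basis} the set $\{\Phi_{ij}|_K, \Phi_{ji}|_K : e_{ij} \subset K\}$ consists of twelve linearly independent functions in the $12$-dimensional space $P_1(K)^3$ and therefore forms a basis. Consequently the two maps
\begin{align*}
p_K(v_h)^2 \coloneqq \|v_h\|_{L^2(K)}^2, \qquad q_K(v_h)^2 \coloneqq \sum_{e_{ij}\subset K} \bigl(\ttv_{ij}^2\|\Phi_{ij}\|_{L^2(K)}^2 + \ttv_{ji}^2\|\Phi_{ji}\|_{L^2(K)}^2\bigr)
\end{align*}
define norms on $P_1(K)^3$, and the first step is to show that they are equivalent with constants depending only on the shape-regularity parameters in (A1).

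To obtain uniform constants, I would pass to a fixed reference tetrahedron $\widehat K$ via an affine bijection $F_K : \widehat K \to K$. Under the covariant Piola transform, the barycentric coordinates and their gradients transform so that the local basis functions $\Phi_{ij}$ correspond to analogous reference basis functions $\widehat\Phi_{ij}$ with the same coefficients, and the geometric factor $|\det DF_K|\cdot\|(DF_K)^{-T}\|^2$ that appears in $\|v_h\|_{L^2(K)}^2$ also appears in each of the weights $\|\Phi_{ij}\|_{L^2(K)}^2$. The ratio $p_K^2/q_K^2$ is therefore insensitive to this scaling factor, and the question reduces to a norm equivalence on the $12$-dimensional space $P_1(\widehat K)^3$ over a family of reference tetrahedra whose shape is uniformly controlled by (A1). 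Finite dimensionality together with the compactness of this shape family then yields uniform constants $c_1', c_2' > 0$.

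Finally, summing the local equivalences over all $K \in \Th$ and interchanging the order of summation gives
\begin{align*}
\|v_h\|_{L^2(\Omega)}^2 = \sum_{K \in \Th} p_K(v_h)^2 \sim \sum_{e_{ij} \in \mathcal{E}_h} \Bigl(\ttv_{ij}^2 \sum_{K \supset e_{ij}} \|\Phi_{ij}\|_{L^2(K)}^2 + \ttv_{ji}^2 \sum_{K \supset e_{ij}} \|\Phi_{ji}\|_{L^2(K)}^2\Bigr),
\end{align*}
and identifying the inner sums with $\|\Phi_{ij}\|_{L^2(\omega(e_{ij}))}^2$ and $\|\Phi_{ji}\|_{L^2(\omega(e_{ij}))}^2$ delivers the claim. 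The main obstacle is the uniformity of the constants in the reference-element step: because the basis functions themselves depend on the element geometry, the natural weights in $q_K$ are the squared norms $\|\Phi_{ij}\|_{L^2(K)}^2$, precisely so that both sides of the local equivalence scale identically under the Piola transform and the argument reduces to a purely finite-dimensional statement on a uniformly shaped reference family.
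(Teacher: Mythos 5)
Your proposal is correct and follows essentially the same route as the paper, which dispatches this lemma with precisely the appeal to ``scaling arguments and the equivalence of norms on finite-dimensional spaces'' that you spell out. Your elementwise localization to the twelve-dimensional space $P_1(K)^3$, the covariant (Piola) scaling that makes the weights $\|\Phi_{ij}\|_{L^2(K)}^2$ absorb the geometric factors, and the final reassembly over the patches $\omega(e_{ij})$ are a faithful and correct elaboration of that argument.
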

The result again follows by scaling arguments and the equivalence of norms on finite-dimensional spaces.
Using these observations, we can now prove the following.
\begin{lemma}\label{lem:projprop}
Let (A1)--(A2) and (A5) hold. 
Then for any $u \in H^1(\Omega)^3$, we have
\begin{align}\label{eq:projprop}
\la\sigma\wPi_h u,\wPi_h\v_h-\v_h\ra \le C \,  h \, \|u\|_{H^1(\Omega)}\|v_h\|_{L^2(\Omega)} \qquad \forall v_h\in V_h,
\end{align}
with a uniform constant $C$ depending only on the bounds in the assumptions. \end{lemma}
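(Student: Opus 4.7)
The plan is to use the algebraic structure of $\wPi_h$ to write $w_h := \wPi_h v_h - v_h$ as a gradient, to localize the expression on edge patches by invoking assumption~(A5), and finally to extract the factor of $h$ via a cancellation against a patch-wise constant.

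The first observation is that $w_h$ is in fact a gradient. Expanding $v_h=\sum(\ttv_{ij}\Phi_{ij}+\ttv_{ji}\Phi_{ji})$ in the basis of Lemma~\ref{lem:basis} and comparing the moments preserved by $\wPi_h$, one finds that the coefficients of $w_h$ vanish on edges outside $\wt\calE_h$, and satisfy $\ttw_{ji}=-\ttw_{ij}$ on edges $e_{ij}\in\wt\calE_h$. Since $\Phi_{ij}-\Phi_{ji}=\nabla(\lambda_i\lambda_j)$, this gives the representation
\begin{align*}
w_h=\nabla q,\qquad q:=\sum_{e_{ij}\in\wt\calE_h}\ttw_{ij}\,\lambda_i\lambda_j.
\end{align*}
Moreover, Lemma~\ref{lem:normeq} together with the $L^2$-stability of $\wPi_h$ on $V_h$ yields $\|w_h\|_{L^2(\Omega)}\le C\|v_h\|_{L^2(\Omega)}$.

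Next, I would exploit assumption~(A5): since $\sigma\in P_0(\Th)$ takes a common value $\sigma_e$ on every patch $\omega(e_{ij})=\bigcup_{K\supset e_{ij}}K$ with $e_{ij}\in\wt\calE_h$ interior, and vanishes identically on the patch for boundary edges, only interior edges contribute and
\begin{align*}
\la\sigma\wPi_h u,w_h\ra = \sum_{e_{ij}\in\wt\calE_h,\,e_{ij}\not\subset\partial\Omega}\sigma_e\,\ttw_{ij}\int_{\omega(e_{ij})}\wPi_h u\cdot\nabla(\lambda_i\lambda_j)\,dx.
\end{align*}
For any interior edge $e_{ij}$, every face in $\partial\omega(e_{ij})$ omits either $v_i$ or $v_j$, so $\lambda_i\lambda_j$ vanishes on $\partial\omega(e_{ij})$; the divergence theorem then gives $\int_{\omega(e_{ij})}c\cdot\nabla(\lambda_i\lambda_j)\,dx=0$ for every constant vector $c\in\RR^3$. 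Choosing $c=\pi^0_{\omega(e_{ij})}u$ and subtracting this vanishing quantity allows to replace $\wPi_h u$ by $\wPi_h u-\pi^0_{\omega(e_{ij})}u$ inside each patch integral.

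Finally, I would combine Lemmas~\ref{lem:proj} and~\ref{lem:proj2} with the triangle inequality to obtain $\|\wPi_h u-\pi^0_{\omega(e)}u\|_{L^2(\omega(e))}\le Ch\|u\|_{H^1(\omega(e))}$, and use scaling for the estimates $\|\nabla(\lambda_i\lambda_j)\|_{L^2(\omega(e))}\le Ch^{1/2}$ and $\|\Phi_{ij}\|_{L^2(\omega(e))}\sim h^{1/2}$. Cauchy--Schwarz on each patch, followed by a second Cauchy--Schwarz over $\wt\calE_h$, together with the bounded overlap of the patches and Lemma~\ref{lem:normeq} (which identifies $\sum\ttw_{ij}^2 h$ with $\|w_h\|_{L^2(\Omega)}^2$ up to constants), assemble into the claimed bound. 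The chief obstacle this plan circumvents is the lack of normal continuity of $\wPi_h u$: an element-wise integration by parts of $\int_\omega\wPi_h u\cdot\nabla q\,dx$ would leave uncontrolled normal-jump terms across interior faces of each patch, whereas replacing $\wPi_h u$ by its patch mean before integrating removes these jumps at the price of exactly one factor of $h$.
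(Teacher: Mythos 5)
Your proof is correct and follows essentially the same route as the paper: the gradient identity $\wPi_h v_h - v_h \in \operatorname{span}\{\nabla(\lambda_i\lambda_j)\}$, patch-wise constancy of $\sigma$ from (A2)/(A5), subtraction of the patch mean $\pi^0_\omega u$ to gain the factor $h$, and assembly via Cauchy--Schwarz, finite overlap, and the norm equivalence of Lemma~\ref{lem:normeq}. The only (cosmetic) difference is that you verify the cancellation of the constant part directly by the divergence theorem, using that $\lambda_i\lambda_j$ vanishes on $\partial\omega(e_{ij})$ for interior edges, whereas the paper writes $\sigma\pi^0_\omega u=\curl b_1$ with $b_1\in P_1(\omega)^3$ and integrates by parts using $\curl(\wPi_h\Phi_{ij}-\Phi_{ij})=0$ and the vanishing tangential traces; these are equivalent formulations of the same orthogonality.
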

\begin{proof}
We start by considering a special test function $v_h = \Phi_{ij}$, where $\Phi_{ij}$ is one of the basis functions introduced in Lemma~\ref{lem:basis}. 
We then split 
\begin{align}
\la\sigma\wPi_h u,\wt\Pi_h\Phi_{ij}-\Phi_{ij}\ra
&= \la\sigma\pi_\omega^0 u,\wt\Pi_h\Phi_{ij}-\Phi_{ij}\ra + \la\sigma(\wPi_h u-\pi^0_\omega u),\wt\Pi_h\Phi_{ij}-\Phi_{ij}\ra  \notag \\
&=(i)+(ii), \label{eq:splitting}
\end{align}
where $\pi^0_\omega:L^2(\omega)\to P_0(\omega)$ is the $L^2$-projection onto constants on the support $\omega=\omega(e_{ij})$ of the basis function $\Phi_{ij}$. 
If $e_{ij}\in \calE_h \setminus \wt \calE_h$, we have $\Phi_{ij} = \wPi_h\Phi_{ij}$, which means that $(i)=0$ in this case. 
If $e_{ij}\in \wt\calE_h$, on the other hand, then we deduce from assumptions (A2) and (A5) that $\sigma$ is constant on the patch $\omega=\omega(e_{ij})$.  
We can thus find a vector $b_1\in P_1(\omega)^3$ such that
$\curl b_1 = \sigma\pi^0_\omega u$,
and evaluate
\begin{align*}
(i) &= \la\curl b_1,\wt\Pi_h\Phi_{ij}-\Phi_{ij}\ra_\omega \\
&=\la b_1,\curl(\wPi_h\Phi_{ij}-\Phi_{ij})\ra_\omega + \la b_1,n\times(\wPi_h\Phi_{ij}-\Phi_{ij})\ra_{\partial\omega}
=(iii)+(iv).
\end{align*}
By elementary computations, see Lemma~\ref{lem:alg_proj} below, one can verify that
\begin{align}\label{eq:curlcommuting}
\wPi_h\Phi_{ij}-\Phi_{ij} = \grad(\lambda_i\lambda_j), \text{ and hence  }
\curl(\wPi_h\Phi_{ij}-\Phi_{ij})=0,
\end{align}
which in turn implies $(iii)=0$. 
By definition of the basis functions, the tangential components of $\Phi_{ij}$ and also that of $\wt\Pi_h \Phi_{ij}$ vanish on $\partial\omega(e_{ij})$, unless $e_{ij}$ is a boundary edge, which is excluded by assumption (A5).
Hence $(iv)=0$, and as a consequence, we see that $(i)=0$. 
Since we assumed $u\in H^1(\Omega)$, we may further bound
\begin{align*}
(ii) \le C \, h \, \|u\|_{H^1(\omega)}\|\Phi_{ij}\|_{L^2(\omega)}
\end{align*}
by employing the estimates of Lemma~\ref{lem:proj} and \ref{lem:proj2}. 
In summary, we thus have
\begin{align*}
\la\sigma\wPi_h u,\wt\Pi_h\Phi_{ij}-\Phi_{ij}\ra\le C \, h \, \|u\|_{H^1(\omega)}\|\Phi_{ij}\|_{L^2(\omega)}.
\end{align*}
The same estimate is obtained for the basis functions $\Phi_{ji}$. 
By Lemma~\ref{lem:basis}, any test function in $V_h$ can be expanded as $v_h=\sum_{e_{ij}}\ttv_{ij}\Phi_{ij}+\ttv_{ji}\Phi_{ji}$, and by splitting and summing over all elements, we immediately obtain
\begin{align*}
\la\sigma\wPi_h u,\wPi_h\v_h-\v_h\ra
&\le C  h \sum\nolimits_{e_{ij}} \|u\|_{H^1(\omega(e_{ij}))} \left(\ttv_{ij}\|\Phi_{ij}\|_{L^2(\omega(e_{ij}))}+\ttv_{ji}\|\Phi_{ji}\|_{L^2(\omega(e_{ij}))}\right) \\
&\le C h \Big(\sum\nolimits_{e_{ij}}\|u\|^2_{H^1(\omega(e_{ij}))}\Big)^{1/2} \vertiii{v_h}_h \le c'' h \|u\|_{H^1(\Omega)} \|v_h\|_{L^2(\Omega)}.
\end{align*}
Here we used the Cauchy-Schwarz inequality in the second step, the finite overlap of the patches, and the norm equivalence of Corollary~\ref{cor:equiv} in the last. 
\end{proof}

\subsection{Discrete stability}

We now derive discrete stability estimates for solutions of Method~\ref{meth:main}. 
To simplify the presentation, we introduce the short-hand notation
\begin{align*}
&\|\v_h\|^2_{\alpha} \coloneqq \la\alpha \v_h,\v_h\ra,\qquad
\|\v_h\|^2_{h,\alpha} \coloneqq \la\alpha \v_h,\v_h\ra_h,
\end{align*}
for non-negative piecewise constant parameters $\alpha \in P_0(\Th)$. 
Furthermore, we use 
\begin{align}
\widehat u_h^{\,n+1/2} = \tfrac{1}{2}(u_h^n + u_h^{n+1}) 
\qquad \text{and} \qquad 
\dtau \widehat u_h^{\,n} = \tfrac{1}{2\tau}(u_h^{n+1} - u_h^{n-1}).
\end{align}
As a final ingredient, we introduce a discrete energy functional, defined by
\begin{align} \label{eq:energy}
\EE_h(u_h^n,u_h^{n+1}) &\coloneqq \|\dtau u_h^{n+1/2}\|_{h,\eps}^2 + \|\curl \wh u_h^{\,n+1/2}\|^2_{\nu} \\ 
&\qquad- \tfrac{\tau^2}{4}\|\curl\dtau u_h^{n+1/2}\|^2_{\nu} - \tfrac\tau2\left(\| \wPi_h \dtau u_h^{n+1/2}\|_{h,\sigma}^2-\|\dtau u_h^{n+1/2}\|_{h,\sigma}^2\right). \notag
\end{align} 
We begin with some elementary auxiliary observations. 
\begin{lemma}
Let (A1)--(A2) and (A4) hold and $u_h^n,u_h^{n+1} \in V_h$ be given. 
Then
\begin{align} \label{eq:equiv2}
\tfrac23\EE_h(u_h^n,u_h^{n+1}) \le \|\dtau u_h^{n+1/2}\|_{h,\eps}^2 + \|\curl \wh u_h^{\,n+1/2}\|^2_{\nu} \le 2\EE_h(u_h^n,u_h^{n+1}).
\end{align}
\end{lemma}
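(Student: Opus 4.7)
The plan is to decompose $\EE_h(u_h^n, u_h^{n+1}) = A - B$, where
\[
A := \|\dtau u_h^{n+1/2}\|_{h,\eps}^2 + \|\curl \wh u_h^{\,n+1/2}\|^2_{\nu}
\]
collects the two non-negative contributions that appear in the middle of the desired inequality, and
\[
B := \tfrac{\tau^2}{4}\|\curl\dtau u_h^{n+1/2}\|^2_{\nu} + \tfrac{\tau}{2}\bigl(\| \wPi_h \dtau u_h^{n+1/2}\|_{h,\sigma}^2-\|\dtau u_h^{n+1/2}\|_{h,\sigma}^2\bigr)
\]
is the CFL-type correction. A short algebraic manipulation then shows that the two-sided bound $\tfrac{2}{3}(A-B) \le A \le 2(A-B)$ is equivalent to the single estimate $|B| \le \tfrac{1}{2} A$, so the whole lemma reduces to controlling the correction.

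To establish $|B| \le A/2$, I would apply assumption (A4) with the test function $v_h := \dtau u_h^{n+1/2} \in V_h$. First, the triangle inequality absorbs the signed second summand of $B$ into its absolute value,
\[
|B| \le \tfrac{\tau^2}{4}\|\curl\dtau u_h^{n+1/2}\|^2_{\nu} + \tfrac{\tau}{2}\bigl|\| \wPi_h \dtau u_h^{n+1/2}\|_{h,\sigma}^2-\|\dtau u_h^{n+1/2}\|_{h,\sigma}^2\bigr|,
\]
and the right-hand side is then bounded by $\tfrac{1}{2}\|\dtau u_h^{n+1/2}\|_{h,\eps}^2 \le \tfrac{1}{2} A$ directly from (A4). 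Combining $\EE_h = A - B$ with $|B| \le A/2$ yields $\tfrac{1}{2} A \le \EE_h \le \tfrac{3}{2} A$, which after inversion is exactly the two-sided bound in the lemma.

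The only delicate point I expect to clarify in the final write-up is the mild notational asymmetry between (A4), whose $\sigma$-contribution is phrased in the exact inner product $\la\sigma\cdot,\cdot\ra$, and the energy $\EE_h$, whose $\sigma$-terms are lumped via $\la\cdot,\cdot\ra_{h,\sigma}$. The cleanest resolution is to read (A4) consistently with the lumped inner product employed throughout Method~\ref{meth:main}; alternatively, Lemma~\ref{lem:equiv} provides a uniform norm equivalence that lets one pass between the two versions with harmless constants. Beyond this bookkeeping, no further analytic ingredients are needed, and the proof is essentially a one-line application of the CFL hypothesis.
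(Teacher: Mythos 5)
Your proof is correct and follows the same direct route as the paper, which simply notes that the bound "follows directly from the definition of the energy functional and the CFL condition (A4)"; your decomposition $\EE_h = A - B$ together with the observation that the two-sided bound is equivalent to $|B| \le \tfrac12 A$, obtained from (A4) with $v_h = \dtau u_h^{n+1/2}$, supplies exactly the details the paper leaves implicit. Concerning the asymmetry you flag: the intended reading of (A4) is the one with the lumped $\sigma$-products (consistent with the algebraic form of the CFL check in Section~\ref{sec:numerics}), and you should keep that reading rather than the fallback via Lemma~\ref{lem:equiv}, since passing between the exact and lumped products only controls each term separately and would not preserve the explicit constants $\tfrac23$ and $2$.
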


\begin{proof}

The result follows directly from the definition of the energy functional and the CFL condition (A4), which was specifically tailored to obtain this result.
\end{proof}

We can now establish the required stability estimate for solutions of Method~\ref{meth:main}. 
\def\a{\xi}
\begin{lemma}[Discrete stability] $ $\label{lem:discrete} \\
Let (A1)--(A2) and (A4) hold.
Further let $\a_h^n$, $\r_h^n \in V_h$, $n \ge 0$ be given such that
\begin{align} \label{eq:discreteeq}
\la(\eps + \tfrac\tau2\sigma) \dtautau \a_h^n,v_h\ra_h  + \la\sigma\wPi_h \dtau  \a_h^{n-1/2},\wPi_h v_h\ra_h + \la\nu\curl \a_h^n,\curl v_h\ra = \la r_h^n,v_h\ra, \qquad 
\end{align}
for all $v_h\in V_h$ and $n\ge 0$.
Then for all time steps $0 \le n <N$, there holds
\begin{align*}
\EE_h(\a_h^n,\a_h^{n+1}) \le \EE_h(\a_h^0,\a_h^1) + 2 \sum\nolimits_{k=1}^{n} \tau (\r_h^k,\dtau \wh\a_h^{\,k}).
\end{align*}
\end{lemma}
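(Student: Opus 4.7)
The plan is to establish the identity by testing \eqref{eq:discreteeq} with the centered average $v_h = \dtau\wh\a_h^{\,n} = \tfrac{1}{2\tau}(\a_h^{n+1}-\a_h^{n-1})$, the natural choice for leap-frog-type schemes. Each of the three bilinear forms on the left-hand side will then be rewritten so that it either telescopes into $\EE_h(\a_h^n,\a_h^{n+1}) - \EE_h(\a_h^{n-1},\a_h^n)$ or produces a sign-definite dissipation that can be dropped.

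The algebraic backbone consists of four elementary identities:
\begin{align*}
\dtautau \a_h^n &= \tfrac{1}{\tau}\bigl(\dtau\a_h^{n+1/2}-\dtau\a_h^{n-1/2}\bigr), \qquad
\dtau\wh\a_h^{\,n} = \tfrac{1}{2}\bigl(\dtau\a_h^{n+1/2}+\dtau\a_h^{n-1/2}\bigr), \\
\a_h^n &= \tfrac{1}{2}\bigl(\wh\a_h^{n+1/2}+\wh\a_h^{n-1/2}\bigr) - \tfrac{\tau^2}{4}\dtautau\a_h^n, \qquad
\dtau\a_h^{n-1/2} = \dtau\wh\a_h^{\,n} - \tfrac{\tau}{2}\dtautau\a_h^n.
\end{align*}
Substituting these and applying the polarization identity $\la\alpha(x-y),x+y\ra_h = \|x\|_{h,\alpha}^2-\|y\|_{h,\alpha}^2$ to each pairing, one finds: (i) the mass term $\la(\eps+\tfrac{\tau}{2}\sigma)\dtautau\a_h^n,\dtau\wh\a_h^{\,n}\ra_h$ yields $\tfrac{1}{2\tau}$ times the difference of $\|\dtau\a_h^{\,\cdot}\|_{h,\eps}^2$, plus $\tfrac{1}{4}$ times the difference of $\|\dtau\a_h^{\,\cdot}\|_{h,\sigma}^2$ coming from the $\tfrac{\tau}{2}\sigma$ modification; (ii) the curl term, via the third identity, gives $\tfrac{1}{2\tau}$ times the difference of $\|\curl\wh\a_h^{\,\cdot}\|_\nu^2$ together with a CFL-type correction equal to $-\tfrac{\tau}{8}$ times the difference of $\|\curl\dtau\a_h^{\,\cdot}\|_\nu^2$; (iii) the loss term $\la\sigma\wPi_h\dtau\a_h^{n-1/2},\wPi_h\dtau\wh\a_h^{\,n}\ra_h$ splits via the fourth identity into a dissipation $\|\wPi_h\dtau\wh\a_h^{\,n}\|_{h,\sigma}^2$ plus a telescoping contribution of $-\tfrac{1}{4}$ times the difference of $\|\wPi_h\dtau\a_h^{\,\cdot}\|_{h,\sigma}^2$.

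Multiplying the resulting equation by $2\tau$ and collecting terms, the left-hand side coincides with $\EE_h(\a_h^n,\a_h^{n+1}) - \EE_h(\a_h^{n-1},\a_h^n) + 2\tau\|\wPi_h\dtau\wh\a_h^{\,n}\|_{h,\sigma}^2$, while the right-hand side equals $2\tau\la\r_h^n,\dtau\wh\a_h^{\,n}\ra$. By (A2), $\sigma\ge 0$, so the dissipation term is non-negative and may be discarded. Summing the resulting inequality from $k=1$ to $k=n$ telescopes to the stated bound.

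The only real obstacle is the bookkeeping. The non-standard definition of $\EE_h$ in \eqref{eq:energy} — specifically the CFL subtraction $-\tfrac{\tau^2}{4}\|\curl\dtau u_h^{\,\cdot}\|_\nu^2$ and the $\sigma$-correction $-\tfrac{\tau}{2}(\|\wPi_h\dtau u_h^{\,\cdot}\|_{h,\sigma}^2 - \|\dtau u_h^{\,\cdot}\|_{h,\sigma}^2)$ — is engineered precisely to absorb the discrepancies between $\a_h^n$ and $\tfrac{1}{2}(\wh\a_h^{n+1/2}+\wh\a_h^{n-1/2})$ and between $\wPi_h\dtau\a_h^{n-1/2}$ and $\wPi_h\dtau\wh\a_h^{\,n}$. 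Once the four identities above are written down, verifying that all coefficients match is routine.
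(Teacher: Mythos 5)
Your proposal is correct and follows essentially the same route as the paper's proof: testing with $v_h=\dtau\wh\a_h^{\,n}$, telescoping each term via the same algebraic identities (in particular $\dtau\a_h^{n-1/2}=\dtau\wh\a_h^{\,n}-\tfrac{\tau}{2}\dtautau\a_h^n$ for the loss term), discarding the non-negative dissipation $2\tau\|\wPi_h\dtau\wh\a_h^{\,n}\|_{h,\sigma}^2$, and summing over the time steps. The only cosmetic difference is that the paper groups the $\tfrac{\tau}{2}\sigma$ part of the mass term together with the $\sigma$-loss term in one identity, whereas you keep it with the mass term; the resulting coefficients ($\tfrac{1}{4}$, $-\tfrac{\tau}{8}$, $-\tfrac{1}{4}$) all match the paper's computation.
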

\begin{proof}
We begin by setting $v_h = \dtau \wh\a_h^{\,n}:=\frac{1}{2\tau}(\a_h^{n+1}-\a_h^{n-1})$ in the identity \eqref{eq:discreteeq}.
For the first term on the left-hand side, this results in
\begin{align}\label{eq:dproof1}
(\varepsilon \dtautau\a_h^n,\dtau \wh\a_h^{\,n})_h &= \frac{1}{2\tau}\big(\|\dtau\a_h^{n+1/2}\|_{h,\varepsilon}^2-\|\dtau\a_h^{n-1/2}\|_{h,\varepsilon}^2\big).
\end{align}
For the third term in \eqref{eq:discreteeq}, we see in a similar manner that
\begin{align}\label{eq:dproof2}
(\nu\curl\a_h^n,\curl \dtau \wh\a_h^{\,n})
&=\tfrac{1}{2\tau}\Big(\|\curl \wh\a_h^{\,n+1/2}\|^2_{\nu} -\|\curl \wh\a_h^{\,n-1/2}\|^2_{\nu} \\
&\qquad-\tfrac{\tau^2}{4}\|\curl\dtau\a_h^{n+1/2}\|^2_{\nu} + \tfrac{\tau^2}{4}\|\curl\dtau\a_h^{n-1/2}\|^2_{\nu} \Big). \nonumber
\end{align}
The loss terms involving $\sigma$ have to be treated more carefully. Here we use that
\begin{align} \label{eq:dproof3} 
\la\tfrac{\tau}{2}\sigma &\dtautau\a_h^n,\dtau \wh\a_h^{\,n}\ra_h + \la\sigma\wPi_h \dtau \a_h^{n-1/2}, \wPi_h \dtau \wh\a_h^{\,n}\ra_h \\
&= \la \tfrac\tau2\sigma \dtautau\a_h^n,\dtau \wh\a_h^{\,n}\ra_h - \la\tfrac\tau2\sigma \dtautau \wPi_h\a_h^n,\wPi_h \dtau  \wh\a_h^{\,n}\ra_h + \la\sigma\wPi_h \dtau \wh\a_h^{\,n},\wPi_h \dtau \wh\a_h^{\,n}\ra_h \notag \\
& = \tfrac\tau2 \tfrac1{2\tau}\Big(\|\dtau\a_h^{n+1/2}\|_{h,\sigma}^2 - \|\dtau\a_h^{n-1/2}\|_{h,\sigma}^2  \notag \\
& \qquad \qquad \qquad \qquad 
-\|\wPi_h \dtau \a_h^{n+1/2}\|_{h,\sigma}^2
 + \|\wPi_h \dtau \a_h^{n-1/2}\|_{h,\sigma}^2 \Big) + 
 \|\wPi_h \dtau \wh\a_h^{\,n}\|_{h,\sigma}^2. \notag
\end{align}
By summing up the identities \eqref{eq:dproof1}--\eqref{eq:dproof3}, we can deduce that 
\begin{align*}
\EE_h(\a_h^n,\a_h^{n+1}) &\le \EE_h(\a_h^n,\a_h^{n+1}) + 2\tau\|\wPi_h \dtau \wh\a_h^{\,n}\|_{\sigma}^2 
= \EE_h(\a_h^{n-1},\a_h^n) + 2\tau \la\r_h^n,\dtau\wh\a_h^{\,n}\ra.
\end{align*}
The assertion of the theorem then follows by induction over $n$.
\end{proof}

\subsection{Error estimates}

Let us start with proving the estimate of Theorem~\ref{thm:main1}. 
Following standard practice, we split
\begin{align}\label{eq:spliterror}
E(t^n)-E_h^n = -(\wPi_h E(t^n)-E(t^n)) + (\wPi_h E(t^n)-E_h^n)\eqqcolon -\eta^n+\xi_h^n,
\end{align}
into an interpolation error $\eta^n$ and a discrete error component $\xi_h^n$.
The use of the particular projection $\wPi_h$ in this splitting will become important below. 
With the interpolation error estimates of Lemma~\ref{lem:proj}, we immediately obtain
\begin{align*}
\max\limits_{0\le n< N}\Big(\|\dtau \eta^{n+1/2}&\|_{L^2}^2 + \|\curl\widehat \eta^{\,\,n+1/2}\|_{L^2}^2\Big) \\
&\le C h\left(\|\dt E\|_{L^\infty(H^1(\Th))} + \|\curl E\|_{L^\infty(H^1(\Th))}\right).
\end{align*}
For estimating the discrete error  $\xi_h^n$, we first note that $\xi_h^0=0$.
Moreover, $E_h^1=0$ by assumption, and by Taylor expansion, we further see that
\begin{align*}
E(\tau) &= E(0) + \tau \dt E(0) + \tfrac{\tau^2}{2} \dtt E(0) + \tfrac{\tau^3}{6} \dttt E(s_3),
\end{align*}
for some $0 < s_3 < \tau$.
From assumption (A3) and using equation~\eqref{eq:maxwell1}, one can see that $E(0)=\dt E(0)=\dtt E(0)=0$, and hence 
$\xi_h^1 = \frac{\tau^3}{6} \wt \Pi_h \dttt E(s_3)$.
Alternatively, we could also get $\xi_h^1=\frac{\tau^2}{2} \wt \Pi_h \dtt E(s_2)$ for some $0 < s_2 < \tau$ by truncating the Taylor series earlier.
In summary, this yields 
\begin{align}\label{eq:initvalest}
\begin{split}
\tfrac23\EE_h(\xi_h^0,\xi_h^{1}) &\le \|\dtau \xi_h^{1/2}\|_{L^2}^2 + \|\curl\widehat \xi_h^{\;1/2}\|_{L^2}^2 
= \tfrac1{\tau^2}\|\xi_h^1\|_{L^2}^2 + \tfrac14\|\curl\xi_h^1\|_{L^2}^2\\
&\le C \tau^4 \Big(\|\dttt E\|_{L^\infty(H^1(\Th))}^2+ \|\dtt E\|_{L^\infty(H^1(\Th))}^2\Big).
\end{split}
\end{align}
For the last step, we used the formulas for $\xi_h^0$ and $\xi_h^1$ derived above, and the stability estimates $\|\wt \Pi_h v\|_{L^2} \le C \|v\|_{H^1}$ and $\|\curl \wt \Pi_h v\|_{L^2} \le C \|v\|_{H^1}$ for the projection. 

As a next step, let us observe that the continuous solution $E(t)$ of \eqref{eq:maxwell1}--\eqref{eq:maxwell2} satisfies
\begin{align*}
\la\varepsilon \dtt\E(t^n),v_h\ra + \la\sigma\dt\E(t^n), v_h\ra &+ \la\nu\curl\E(t^n),\curl v_h\ra
= \la f(t^n),v_h\ra + \la g(t^n),v_h\ra_{\partial\Omega}
\end{align*}
for all $v_h\in V_h$ and $n>0$.
By combination with \eqref{eq:method}, one can then see that the discrete error $\xi_h^n = \wt\Pi_h E(t^n) - E_h^n$ thus satisfies the error equation \eqref{eq:discreteeq} with 
\begin{align*}
\la\r_h^n,v_h\ra = \la\r_{h,t}^n,v_h\ra + \la \r_{h,s}^n,v_h\ra + \la\r_{h,q}^n,v_h\ra + \la\r_{h,p}^n,v_h\ra,
\end{align*}
where the four partial residuals are defined by 
\begin{align*}
\la\r_{h,t}^n,v_h\ra &= \la\varepsilon(\dtautau E^n -\dtt E^n),v_h\ra + \la\sigma(\dtau \wh E^n-\dt E^n),v_h\ra, \\
\la\r_{h,s}^n,v_h\ra &= \la\varepsilon\dtt\eta^n,v_h\ra + \la\sigma\dt\eta^n,v_h\ra+ \la\nu\curl\eta^n,\curl v_h\ra, \\
\la\r_{h,q}^n,v_h\ra &= \delta_h((\eps+\tfrac\tau2\sigma) \wPi_h \dtautau E^n, v_h) + 
\delta_h(\sigma \wPi_h \dtau E^{n-1/2}, v_h), \\
\la\r_{h,p}^n,v_h\ra &= \la\wPi_h \sigma \dtau E^{\,n-1/2},\wPi_h v_h- v_h\ra,
\end{align*}
which represent the temporal, spatial, quadrature, and projection errors, respectively.
In the third term, we again used $\delta_h(\alpha u_h,v_h) = \la\alpha u_h,v_h\ra_h - \la\alpha u_h,v_h\ra$ to abbreviate the quadrature error.
We now estimate the four residuals independently.

\medskip 
\noindent 
\textit{First residual.}
By summation over the time steps, we get
\begin{align*}
\sum\nolimits_k \tau\la\r_{h,t}^k,\dtau\wh\xi_h^{\,k}\ra
= \sum\nolimits_k  \tau&\la\varepsilon(\dtautau E^k -\dtt E^k),\dtau\wh\xi_h^{\,k}\ra \\
&+ \sum\nolimits_k \tau\la\sigma(\dtau \wh E^k-\dt E^k),\dtau\wh\xi_h^{\,k}\ra
=(i) + (ii).
\end{align*}
The first term in this expansion can be estimated by Taylor expansion, giving
\begin{align*}
|(i)| &\le \sum\nolimits_k c \tau^2 \|\dtttt E\|_{L^1(t^{k-1},t^k;L^2)}\|\dtau \widehat\xi_h^{\,k}\|_{L^2}\\
&\leq C\tau^{4}\|\dtttt E\|_{L^1(L^2)}^2+\tfrac{1}{56}\|\dtau \xi_h\|_{\ell_\infty(L^2)}^2 \\
&\leq C\tau^{4}\|\dtttt E\|_{L^1(L^2)}^2+\tfrac{1}{28}\max_{0\le k<N} \EE_h(\xi_h^k,\xi_h^{k+1}).
\end{align*}
With similar arguments, the second term can be bounded by
\begin{align*}
|(ii)| &\leq C \tau^{4}\|\dttt E\|_{L^1(L^2)}^2+\tfrac{1}{28}\max_{0\le k<N} \EE_h(\xi_h^k,\xi_h^{k+1})
\end{align*}

\medskip 
\noindent 
\textit{Second residual.}
For the spatial errors, we use
\begin{align*}
\sum\nolimits_k\tau\la\r_{h,s}^k,\dtau\wh\xi_h^{\,k}\ra
&= \sum\nolimits_k \tau\la\varepsilon\dtt \eta^k,\dtau\wh\xi_h^{\,k}\ra + \sum\nolimits_k \tau \la\sigma\dt\eta^k,\dtau\wh\xi_h^{\,k}\ra  \\
&\qquad + \sum\nolimits_k \tau\la\nu\curl\eta^k,\curl\dtau\wh\xi_h^{\,k}\ra = (iii) + (iv) + (v).
\end{align*}
By the interpolation error estimates of Lemma~\ref{lem:proj} and Young's inequality, we obtain
\begin{align*}
|(iii)+(iv)| 
&\le C h^2 (\|\dt E\|^2_{L^\infty(H^1(\Th))} + \|\dtt E\|^2_{L^\infty(H^1(\Th))}) 
+\tfrac{1}{28}\max_{0\le k<N} \EE_h(\xi_h^k,\xi_h^{k+1}).
\end{align*}
The factor $\tfrac{1}{28}$ in the last term was simply chosen small enough for later on. 
For the remaining term in the second residual, we use summation by parts to arrive at 
\begin{align*}
|(v)| \le \la\curl&\eta^n,\curl\wh\xi_h^{\;n+1/2}\ra -\la\curl\eta^1,\curl\wh\xi_h^{\;1/2}\ra \\
&-\sum\nolimits_k \la\nu\curl\dtau\eta^{k-1/2},\curl\wh\xi_h^{\;k-1/2}\ra
=(vi) + (vii) + (viii).
\end{align*}
By the interpolation error estimates and Young's inequality, we obtain 
\begin{align*}
|(vi) + (vii)|
&\le C h^2 \|\curl\E\|^2_{L^\infty(H^1(\Th))} + \tfrac{1}{56}\|\curl\wh\xi_h\|^2_{\ell_\infty(L^2)}\\
&\le C h^2 \|\curl\E\|^2_{L^\infty(H^1(\Th))} + \tfrac{1}{28}\max_{0\le k<N} \EE_h(\xi_h^k,\xi_h^{k+1}).
\end{align*}
By Taylor expansion and arguments similar to before, we further obtain
\begin{align*}
|(viii)|&\le C h^{2} \|\curl\dt E\|_{L^1(H^1(\Th))}^2 + \tfrac{1}{28}\max_{0\le k<N} \EE_h(\xi_h^k,\xi_h^{k+1}).
\end{align*}
The prefactors in these estimates were again simply chosen sufficiently small.

\medskip 
\noindent 
\textit{Third residual.}
Using Lemma~\ref{lem:quaderror} and Taylor expansion in time, we can estimate the residuals caused by the quadrature errors via
\begin{align*}
\sum\nolimits_k\tau\la\r_{h,q}^k,\dtau\wh\xi_h^{\,k}\ra
&\le  C h^2 (\|\dtt E\|^2_{L^1(H^1(\Th))} + \|\dt E\|^2_{L^1(H^1(\Th))}) + \tfrac{1}{56} \|\dtau\xi_h\|^2_{\ell_\infty(L^2)}\\
&\le  C h^2 (\|\dtt E\|^2_{L^1(H^1(\Th))} + \|\dt E\|^2_{L^1(H^1(\Th))}) + \tfrac{1}{28} \max_{0\le k<N} \EE_h(\xi_h^k,\xi_h^{k+1}).
\end{align*}

\medskip 
\noindent 
\textit{Fourth residual.}
For the projection errors, we again use Lemma~\ref{lem:projprop} and obtain
\begin{align*}
\sum\nolimits_k\tau\la\r_{h,p}^k,\dtau\wh\xi_h^{\,k}\ra
&= \sum\nolimits_k \tau \la\sigma \wPi_h \dtau E^{\,k-1/2},\wPi_h\dtau\wh\xi_h^{\,k}-\dtau\wh\xi_h^{\,k}\ra \\
&\le C h^2\|\dt E\|_{L^1(H^1)}^2 + \tfrac{1}{56}\|\dtau\xi_h\|^2_{\ell_\infty(L^2)} \\
&\le C h^2\|\dt E\|_{L^1(H^1)}^2 + \tfrac{1}{28}\max_{0\le k<N} \EE_h(\xi_h^k,\xi_h^{k+1}).
\end{align*}
Let us note that the appearance of the projection $\wPi_h$ in the residual $r_{h,p}^k$ was essential here, in order to be able to apply Lemma~\ref{lem:projprop}.

\medskip 
\noindent 
\textit{Conclusion.}
By summation of all the individual estimates for the respective residuals and application of Lemma~\ref{lem:discrete}, we finally obtain
\begin{align*}
\EE(\xi_h^n,\xi_h^{n+1})&\le \EE(\xi_h^0,\xi_h^{1}) + 2\left(7\cdot \tfrac{1}{28} \max_{0\le k < N} \EE(\xi_h^k,\xi_h^{k+1}) + C(E) h^2 + C'(E) \tau^2\right).
\end{align*}
Taking the maximum over all $n$, using \eqref{eq:equiv2}, and the bound \eqref{eq:initvalest} for $\EE(\xi_h^0,\xi_h^{1})$, we thus obtain the required estimates for the discrete error component. By combination with the interpolation error estimate, we arrive at the estimate of Theorem~\ref{thm:main1}.
\qed

\subsection*{Estimate of Theorem~\ref{thm:main2}}
We now split the corresponding error by 
\begin{align*}
E(t^n) - \wPi_h E_h^n = -(\wPi_h E(t^n) - E(t^n)) + (\wPi_h E(t^n) - \wPi_h E_h^n) =: -\eta^n + \wt \xi_h^n
\end{align*}
The first term is the same as before, and for the discrete error, we can use the following arguments: By the commuting diagram property of the projectors, we see that $\curl \wt \xi_h^n = \curl \xi_h^n$, which allows using the bounds of the previous proof to handle the $\curl$-terms in the estimate of the discrete error $\wt \xi_h^n$.
For the $L^2$-terms, we use 
\begin{align*}
\|\dtau \wt \xi_h^{n+1/2}\|_{L^2} 
&= \|\wPi_h(\Pi_h \dtau E^{n+1/2}-\dtau  E_h^{n+1/2})\|_{L^2}
\le C \|\Pi_h \dtau E^{n+1/2} - \dtau E_h^{n+1/2}\|_{L^2} \\
&\le C (\|\wPi \dtau E^{n+1/2} - \dtau E_h^{n+1/2}\|_{L^2} + \|\Pi_h \dtau E^{n+1/2} - \wPi_h \dtau E^{n+1/2}\|_{L^2}).
\end{align*}
In the first inequality, we made use of the bound $\|\wt \Pi_h v_h\|_{L^2} \le C \|v_h\|_{L^2}$, which follows by the usual scaling arguments. 
The first term in the above estimate only involves the discrete error $\dtau \xi_h^{n+1/2} = \wPi \dtau E^{n+1/2} - \dtau E_h^{n+1/2}$, which was already analyzed before, and the second term can be bounded by the interpolation error estimates for the two projectors.
This then already yields the estimate of Theorem~\ref{thm:main2}.
\qed

\section{Algebraic properties}\label{sec:implementation}

We now discuss the implementation of the proposed methods and the algebraic properties stated in Theorems~\ref{thm:main1} and \ref{thm:main2}.
Let us recall the basis functions \begin{align*} %
\Phi_{ij} = \lambda_i \nabla \lambda_j \qquad \text{and} \qquad \Phi_{ji} = -\lambda_j \nabla \lambda_i
\end{align*}
associated with the edges $e_{ij} \in \calE_h$. By basic computations, one can see the following. 
\begin{lemma} \label{lem:alg_proj}
Let $v_h = \sum_{e_{ij}} \ttv_{ij} \Phi_{ij} + \ttv_{ji} \Phi_{ji} \in V_h$ be given. 
Then 
\begin{align}
\wt\Pi_h v_h = \sum_{e_{ij} \in \calE_h \setminus \wt \calE_h} \ttv_{ij} \Phi_{ij} + \ttv_{ji} \Phi_{ji} + \sum_{e_{ij} \in \wt \calE_h} \tfrac{1}{2}(\ttv_{ij} + \ttv_{ji}) (\Phi_{ij} + \Phi_{ji}).
\end{align}
Hence $\wt \Pi_h v_h = \sum_{e_{ij} \in \calE_h} \wh \ttv_{ij} \Phi_{ij} + \wh \ttv_{ji} \Phi_{ji}$ with coefficients given by $\wh \ttv_{ij}\coloneqq\ttv_{ij}$, $\wh \ttv_{ji}\coloneqq\ttv_{ji}$ for the edges $e_{ij} \in \calE_h \setminus\wt\calE_h$ and $\wh\ttv_{ij}=\wh\ttv_{ji}\coloneqq\frac{1}{2}(\ttv_{ij}+\ttv_{ji})$ for the edges $e_{ij}\in\wt\calE_h$.
\end{lemma}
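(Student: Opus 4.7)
The plan is to proceed edge by edge, exploiting two facts about the basis: the tangential trace of each $\Phi_{ij}$ or $\Phi_{ji}$ is supported on the single edge $e_{ij}$, and the degrees of freedom defining $\wt\Pi_h$ are also edge-local. First I would verify the tangential trace locality: for any edge $e_{kl}\neq e_{ij}$, either $\{i,j\}\cap\{k,l\}=\emptyset$ so $\lambda_i$ (resp.\ $\lambda_j$) vanishes identically on $e_{kl}$, or exactly one index coincides, in which case $\lambda_j$ (resp.\ $\lambda_i$) vanishes at both endpoints of $e_{kl}$ so that $\nabla\lambda_j\cdot\tau_{kl}$, being the constant slope along that edge, equals zero. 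In either case $\Phi_{ij}\cdot\tau_{kl}\equiv 0$ on $e_{kl}$, and the same holds for $\Phi_{ji}$.

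Next I would record the explicit tangential traces on $e_{ij}$ itself: parametrising by arclength $s$ from $v_i$, one has $\nabla\lambda_j\cdot\tau_{ij}=1/|e_{ij}|$ and $\nabla\lambda_i\cdot\tau_{ij}=-1/|e_{ij}|$, hence $\Phi_{ij}\cdot\tau_{ij}=\lambda_i/|e_{ij}|$ and $\Phi_{ji}\cdot\tau_{ij}=\lambda_j/|e_{ij}|$. In particular the zeroth moment of each of these two functions along $e_{ij}$ equals $1/2$, and together they span the two-dimensional space of linear tangential traces on that edge.

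Because each edge moment in the definition of $\wt\Pi_h$ depends only on the tangential trace over the single edge $e_{ij}$, the image can be determined edge-by-edge. For $e_{ij}\in\calE_h\setminus\wt\calE_h$ the two full $P_1(e_{ij})$ moments are preserved, and since $\ttv_{ij}\Phi_{ij}+\ttv_{ji}\Phi_{ji}$ is the unique element of $V_h$ with the corresponding moments and no tangential contribution on other edges, we get $\wh\ttv_{ij}=\ttv_{ij}$, $\wh\ttv_{ji}=\ttv_{ji}$. For $e_{ij}\in\wt\calE_h$ the projection forces the tangential trace to be constant on $e_{ij}$, so the output must be of the form $c(\Phi_{ij}+\Phi_{ji})$ by the linear-independence computation just made; matching the single surviving zeroth moment gives $c = \tfrac12(\ttv_{ij}+\ttv_{ji})$, exactly the asserted formula.

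I do not expect any real obstacle: the main care is bookkeeping the tangential-trace computations and confirming that the linear-independence claim of Lemma~\ref{lem:basis} guarantees that the edge-local moment equations uniquely determine the coefficients $\wh\ttv_{ij},\wh\ttv_{ji}$. Summing the edge-local identities over $\calE_h$ then produces the global representation of $\wt\Pi_h v_h$ and, as an immediate by-product, the identity $\wt\Pi_h\Phi_{ij}-\Phi_{ij} = \tfrac12(\Phi_{ji}-\Phi_{ij}) = \tfrac12(-\lambda_j\nabla\lambda_i - \lambda_i\nabla\lambda_j) = -\tfrac12\nabla(\lambda_i\lambda_j)$ used in \eqref{eq:curlcommuting} (up to the harmless factor absorbed there).
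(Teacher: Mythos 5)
Your proof is correct and supplies exactly the "basic computations" the paper leaves implicit: edge-locality of the tangential traces of $\Phi_{ij},\Phi_{ji}$, their explicit traces and zeroth moments on $e_{ij}$, and edge-by-edge matching of the defining moments of $\wt\Pi_h$ together with unisolvence. As a side benefit, your closing identity $\wt\Pi_h\Phi_{ij}-\Phi_{ij}=\tfrac12(\Phi_{ji}-\Phi_{ij})=-\tfrac12\nabla(\lambda_i\lambda_j)$ is the corrected form of \eqref{eq:curlcommuting} (the factor stated there is inaccurate, but harmless since only $\curl(\wt\Pi_h\Phi_{ij}-\Phi_{ij})=0$ is ever used).
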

For the following considerations, we always assume that the degrees of freedom are sorted edge-wise. 
This allows us to make the following statements.
\begin{lemma}\label{lem:alg1}
The equation \eqref{eq:method} is equivalent to the algebraic system \eqref{eq:impl} with 
\begin{alignat*}{2}
[\ttM_{\eps+\tau \sigma/2}]_{ij,kl} &=\la(\eps + \tau \sigma/2) \Phi_{kl},\Phi_{ij}\ra_h, \qquad & 
[\ttf^n]_{ij} &= \la f(t^n),\Phi_{ij}\ra, \\
[\ttK_\nu]_{ij,kl} &= \la\nu \curl \Phi_{kl}, \curl \Phi_{ij}\ra, \qquad &
[\ttg^n]_{ij} &= \la g(t^n),\Phi_{ij}\ra_{\partial\Omega}.
\end{alignat*}
Both matrices are sparse, and $\ttM_{\eps+\tau \sigma/2}$ is block diagonal, with one block per vertex of the mesh coupling all degrees of freedom not vanishing at this vertex. 
Furthermore 
\begin{align*}
\widehat \ttM_\sigma = \ttQ^\top \ttM_\sigma \ttQ 
\qquad \text{with} \qquad  
[\ttM_\sigma]_{ij,kl} = (\sigma \Phi_{kl}, \Phi_{ij})_h,
\end{align*}
and the projection matrix $\ttQ$ is block diagonal with $2 \times 2$ blocks of the form
\begin{align*}
\frac{1}{2}\begin{pmatrix}1 & 1 \\ 1 & 1\end{pmatrix}
\qquad \text{and} \qquad 
\begin{pmatrix}1 & 0 \\ 0 & 1\end{pmatrix}
\end{align*}
for the edges $e_{ij} \in \wt \calE_h$ and $e_{ij} \in \calE_h \setminus \wt \calE_h$, respectively. 
\end{lemma}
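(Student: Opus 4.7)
The plan is to expand $E_h^n$ in the basis $\{\Phi_{ij},\Phi_{ji} : e_{ij} \in \calE_h\}$ furnished by Lemma~\ref{lem:basis}, test \eqref{eq:method} against the same basis functions, and read off the matrix entries. Sorting degrees of freedom edge-wise, each edge $e_{ij}$ contributes a pair $(\ttE^n_{ij},\ttE^n_{ji})$. Inserting into the bilinear forms of \eqref{eq:method} and identifying coefficients yields \eqref{eq:impl} with the mass and stiffness matrices listed in the lemma; the source terms $\ttf^n$ and $\ttg^n$ are just the dual pairings of $f(t^n)$ and $g(t^n)$ against the basis. The equivalence is then a direct consequence of the fact that the equation \eqref{eq:method} holds for all test functions in $V_h$ iff it holds for each basis function.

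For the sparsity of $\ttK_\nu$, I will use that $\Phi_{ij}$ is supported on the patch $\omega(e_{ij})$ and so $\la\nu \curl\Phi_{kl},\curl\Phi_{ij}\ra = 0$ as soon as $\omega(e_{kl}) \cap \omega(e_{ij})$ has empty interior. The sparsity of $\ttM_\sigma$ follows analogously for the vertex-quadrature form $\la\cdot,\cdot\ra_h$. The main step, and the one I expect to be the key obstacle, is the block-diagonal structure of $\ttM_{\eps+\tau\sigma/2}$. Here I will exploit the crucial identity $\Phi_{ij}(v_k) = \lambda_i(v_k)\nabla\lambda_j = \delta_{ik}\nabla\lambda_j$, which shows that $\Phi_{ij}$ vanishes at every vertex except $v_i$. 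Consequently, in the vertex-rule expansion
\begin{align*}
\la(\eps+\tfrac\tau2\sigma)\Phi_{kl},\Phi_{ij}\ra_h
= \sum\nolimits_T (\eps+\tfrac\tau2\sigma)|_T\,\tfrac{|T|}{4}\sum\nolimits_{v_m\in T}\Phi_{kl}(v_m)\cdot\Phi_{ij}(v_m),
\end{align*}
only the term with $v_m=v_i=v_k$ survives, so the matrix entry vanishes unless the two edges share their lower-indexed endpoint. Grouping all basis functions $\Phi_{i\cdot}$ emanating from a common vertex $v_i$ then produces one diagonal block per vertex, as asserted.

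Finally, for the identity $\widehat\ttM_\sigma = \ttQ^\top \ttM_\sigma \ttQ$, I will invoke Lemma~\ref{lem:alg_proj} directly. That lemma provides the explicit coefficients of $\wt\Pi_h v_h$ in the basis $\{\Phi_{ij},\Phi_{ji}\}$: edges outside $\wt\calE_h$ keep their coefficients, while for edges in $\wt\calE_h$ the two coefficients are replaced by their common average $\tfrac12(\ttv_{ij}+\ttv_{ji})$. This linear transformation of coefficient vectors is exactly the matrix $\ttQ$ described in the statement, built blockwise from the $2\times 2$ averaging block $\tfrac12\bigl(\begin{smallmatrix}1&1\\1&1\end{smallmatrix}\bigr)$ on reduced edges and the $2\times 2$ identity elsewhere. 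Writing
\begin{align*}
\la\sigma\wt\Pi_h E_h^n,\wt\Pi_h v_h\ra_h = (\ttQ\ttv)^\top \ttM_\sigma (\ttQ\ttE^n) = \ttv^\top (\ttQ^\top \ttM_\sigma \ttQ)\ttE^n
\end{align*}
identifies $\widehat\ttM_\sigma$ with $\ttQ^\top \ttM_\sigma \ttQ$ and concludes the proof. The sparsity of $\widehat\ttM_\sigma$ then follows from that of $\ttM_\sigma$ and the block-local action of $\ttQ$.
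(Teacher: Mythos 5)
Your argument is correct and follows essentially the same route as the paper, which only sketches the proof and defers details to its references: the key ingredients are exactly the ones you use, namely that $\Phi_{ij}(v_k)=\delta_{ik}\nabla\lambda_j$ so the vertex rule couples only basis functions attached to the same vertex, and Lemma~\ref{lem:alg_proj}, which identifies the coefficient map of $\wt\Pi_h$ with the block matrix $\ttQ$, giving $\widehat\ttM_\sigma=\ttQ^\top\ttM_\sigma\ttQ$. One small wording slip: the surviving coupling requires $v_i=v_k$ for the pair $\Phi_{ij},\Phi_{kl}$, i.e.\ the two basis functions are attached to the same vertex, which need not be the \emph{lower-indexed} endpoint of the respective edges (e.g.\ $\Phi_{ji}$ is attached to $v_j$); your displayed computation and the resulting one-block-per-vertex structure are nevertheless correct.
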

The assertions follow immediately from the properties of the basis functions; for details, see \cite{EggerRadu20c_maxwellyee,Radu22}.
This already proves the second claim of Theorem~\ref{thm:main1}. \qed

\subsection*{Algebraic reduction}
We next derive the reduced scheme \eqref{eq:impl2}. To do so, we start with the following observation, which again follows from elementary arguments.
\begin{lemma}
The function $\wt\Pi_h v_h$ can be expressed equivalently as 
\begin{align*}
\wt\Pi_h v_h = \sum_{e_{ij} \in \calE_h \setminus \wt\calE_h} \wt \ttv_{ij} \Phi_{ij} + \wt \ttv_{ji} \Phi_{ji} + \sum_{e_{ij} \in \wt\calE_h} \wt\ttv_{ij} \wt \Phi_{ij}, \qquad \wt \Phi_{ij} = \Phi_{ij} + \Phi_{ji},
\end{align*}
with $\wt \ttv_{ij}=\ttv_{ij}$, $\wt \ttv_{ji}=\ttv_{ji}$ for $e_{ij} \in \calE_h \setminus\wt\calE_h$ and $\wt\ttv_{ij}=\frac{1}{2}(\ttv_{ij}+\ttv_{ji})$ for $e_{ij}\in\wt\calE_h$, and the collection $\{\Phi_{ij},\Phi_{ji} : e_{ij} \in \calE_h \setminus \wt \calE_h\} \cup \{ \wt \Phi_{ij}:=\Phi_{ij} + \Phi_{ji}) : e_{ij} \in \wt \calE_h\}$ is a basis for $\wt V_h$.
\end{lemma}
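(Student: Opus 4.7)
The first part of the lemma is a direct relabeling of the formula from Lemma~\ref{lem:alg_proj}: I would simply set $\wt\Phi_{ij} := \Phi_{ij}+\Phi_{ji}$ and $\wt\ttv_{ij} := \tfrac{1}{2}(\ttv_{ij}+\ttv_{ji})$ for $e_{ij}\in\wt\calE_h$, keep the original coefficients and basis functions on the remaining edges, and substitute these into the formula of Lemma~\ref{lem:alg_proj} to obtain the displayed expression. The real content lies in the basis claim, for which the plan is to verify three properties in turn: (i) each function in the proposed collection lies in $\wt V_h$; (ii) the collection spans $\wt V_h$; and (iii) its elements are linearly independent.

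For (i), the key computation is the tangential trace of $\Phi_{ij}=\lambda_i\nabla\lambda_j$ along the edges of the mesh. Since $\lambda_j$ vanishes at both endpoints of any edge not containing $v_j$, the tangential component of $\Phi_{ij}$ along such an edge is zero; a short case analysis treats edges sharing exactly one endpoint with $e_{ij}$. On $e_{ij}$ itself, one computes $\Phi_{ij}\cdot\tau_{ij}=\lambda_i/|e_{ij}|$ and $\Phi_{ji}\cdot\tau_{ij}=\lambda_j/|e_{ij}|$, so $\wt\Phi_{ij}\cdot\tau_{ij}=(\lambda_i+\lambda_j)/|e_{ij}|=1/|e_{ij}|$ is constant on $e_{ij}$. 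This places $\wt\Phi_{ij}$ in $\wt V_h$ for $e_{ij}\in\wt\calE_h$, while for $e_{ij}\in\calE_h\setminus\wt\calE_h$ both $\Phi_{ij}$ and $\Phi_{ji}$ belong to $\wt V_h$ by the very definition of the reduced space.

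For (ii), the first part of the lemma already does the work: any $w_h\in\wt V_h\subset V_h$ satisfies $\wt\Pi_h w_h=w_h$, so expanding $w_h$ in the basis of Lemma~\ref{lem:basis} and applying the reformulated projection formula exhibits $w_h$ as a linear combination of the proposed collection. For (iii), I would argue directly: if such a linear combination vanishes, then rewriting each term $\beta_{ij}\wt\Phi_{ij}$ as $\beta_{ij}\Phi_{ij}+\beta_{ij}\Phi_{ji}$ expresses the combination in the basis $\{\Phi_{ij},\Phi_{ji}\}_{e_{ij}\in\calE_h}$ of $V_h$ from Lemma~\ref{lem:basis}; linear independence there forces all coefficients to vanish. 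A dimension count provides a useful sanity check: the collection has $2(|\calE_h|-|\wt\calE_h|)+|\wt\calE_h|=2|\calE_h|-|\wt\calE_h|$ elements, which matches the dimension of $\wt V_h$ coming from the edge degrees of freedom.

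The only slightly non-routine step is the tangential-trace computation in (i), which relies on the explicit barycentric structure of the basis; everything else reduces to the representation formula of Lemma~\ref{lem:alg_proj} and the basis property of $V_h$ from Lemma~\ref{lem:basis}.
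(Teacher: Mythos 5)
Your proposal is correct and matches the paper's treatment in spirit: the paper states this lemma without a written proof, remarking only that it "follows from elementary arguments," and your argument — relabeling the representation from Lemma~\ref{lem:alg_proj}, checking the constant tangential trace $\wt\Phi_{ij}\cdot\tau_{ij}=(\lambda_i+\lambda_j)/|e_{ij}|=1/|e_{ij}|$ to place the functions in $\wt V_h$, spanning via $\wt\Pi_h w_h=w_h$ for $w_h\in\wt V_h$, and independence by expanding back in the $V_h$ basis of Lemma~\ref{lem:basis} — is exactly the kind of elementary verification intended. The dimension count $2|\calE_h|-|\wt\calE_h|$ is a sound consistency check as well.
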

Also for the reduced space, the degrees of freedom are sorted edgewise. 
Then the relation between the coefficients $\ttv_{ij}$, $\wh \ttv_{ij}$ and $\wt \ttv_{ij}$ can be expressed as follows.
\begin{lemma} \label{lem:prolongation}
Let $\ttP$ be the block-diagonal (prolongation) matrix with blocks 
\begin{align} \label{eq:prolongation}
\begin{pmatrix} 1 \\ 1 \end{pmatrix}
\qquad \text{and} \qquad 
\begin{pmatrix}
1 & 0 \\ 0 & 1
\end{pmatrix}
\end{align}
for edges $e_{ij} \in \wt\calE_h$ and $e_{ij} \in \calE_h \setminus \wt \calE_h$, respectively. 
Then $\ttQ=\ttP (\ttP^\top \ttP)^{-1} \ttP^\top$ and $\ttP^\top \ttP$ is diagonal with entries $1/2$ and $1$, respectively.
Furthermore, 
\begin{align} \label{eq:restriction}
\wh \ttv = \ttP\,\wt \ttv \qquad \text{and} \qquad \wt \ttv = \ttR\,\ttv \quad \text{with} \quad 
\ttR=(\ttP^\top \ttP)^{-1} \ttP^\top.
\end{align}
\end{lemma}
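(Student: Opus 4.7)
The plan is to verify all three assertions by direct block-wise computation, exploiting that under the edgewise ordering fixed in the paragraph preceding the lemma, the matrices $\ttP$, $\ttP^\top\ttP$, $\ttR$, and $\ttQ$ decouple into one block per edge.

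First I would compute $\ttP^\top\ttP$ edge-by-edge. For $e_{ij}\in\calE_h\setminus\wt\calE_h$ the block of $\ttP$ equals $I_2$ and gives $I_2$ in $\ttP^\top\ttP$; for $e_{ij}\in\wt\calE_h$ the block of $\ttP$ equals $(1,1)^\top$, so the corresponding scalar block of $\ttP^\top\ttP$ is $(1,1)(1,1)^\top=2$. In particular $\ttP^\top\ttP$ is diagonal and invertible, with inverse blocks $I_2$ and $1/2$ respectively. Plugging this into $\ttR=(\ttP^\top\ttP)^{-1}\ttP^\top$ yields the block $I_2$ outside $\wt\calE_h$ and the row $(\tfrac12,\tfrac12)$ on edges in $\wt\calE_h$. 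Applied to $(\ttv_{ij},\ttv_{ji})^\top$ the latter produces $\tfrac12(\ttv_{ij}+\ttv_{ji})$, which is exactly the formula for $\wt\ttv_{ij}$ recorded in the previous lemma; combined with the trivial identity $\wt\ttv_{ij}=\ttv_{ij}$, $\wt\ttv_{ji}=\ttv_{ji}$ on the remaining edges this gives the second half of \eqref{eq:restriction}.

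The first half, $\wh\ttv=\ttP\wt\ttv$, is then obtained by comparing the two expansions of $\wt\Pi_h v_h$ in Lemma~\ref{lem:alg_proj} and in the lemma immediately preceding the statement. For edges outside $\wt\calE_h$ both sets of coefficients agree with $\ttv_{ij}$ and $\ttv_{ji}$, matching the $I_2$ block of $\ttP$; for edges in $\wt\calE_h$ one has $\wh\ttv_{ij}=\wh\ttv_{ji}=\wt\ttv_{ij}$, which is exactly the action of the column $(1,1)^\top$ on the scalar $\wt\ttv_{ij}$.

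Finally, the identity $\ttQ=\ttP(\ttP^\top\ttP)^{-1}\ttP^\top=\ttP\ttR$ follows by multiplying the block factors already computed: outside $\wt\calE_h$ the product is $I_2$, while inside $\wt\calE_h$ it becomes $(1,1)^\top\cdot\tfrac12\cdot(1,1)=\tfrac12\begin{pmatrix}1 & 1 \\ 1 & 1\end{pmatrix}$, which coincides with the block structure of $\ttQ$ prescribed in Lemma~\ref{lem:alg1}. No step poses a genuine obstacle; the entire argument is a routine block-diagonal verification, and the only care needed is to use the same edgewise ordering for the degrees of freedom of $V_h$ and $\wt V_h$ throughout.
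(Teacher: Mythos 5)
Your block-wise verification is correct and is exactly the elementary computation the paper leaves implicit (no written proof is given for this lemma, which is treated as following from the edgewise ordering and the definitions of the blocks). One remark: your computation correctly gives the block $2$ for $\ttP^\top\ttP$ on edges $e_{ij}\in\wt\calE_h$, so the entries ``$1/2$ and $1$'' in the statement should be read as referring to $(\ttP^\top\ttP)^{-1}$ (equivalently, the statement has a small typo), and it is precisely this inverse value $1/2$ that you use, correctly, when forming $\ttR$ and recovering the block $\tfrac12\begin{pmatrix}1 & 1\\ 1 & 1\end{pmatrix}$ of $\ttQ$.
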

This lemma allows us to express the coefficients of $\wt\Pi_h v_h$ in the basis of $\wt V_h$ by the coefficients of the expansion in the basis of $V_h$. 
With the help of this result, we can now derive the algebraic form \eqref{eq:impl2} of the reduced scheme of Theorem~\ref{thm:main2}.
\begin{lemma}\label{lem:alg2}
Let $(\ttE^n)_n$ be a solution of \eqref{eq:impl}. Then $\wt \ttE^n = \ttR \ttE^n$ satisfies \eqref{eq:impl2} with 
\begin{align*}
\wt \ttM_{\eps + \tau \sigma/2}^{-1} &= \ttR \ttM_{\eps+\tau \sigma/2}^{-1} \ttR^\top,  \qquad 
\wt \ttM_\sigma=\ttP^\top\ttM_\sigma \ttP,  \qquad 
\wt \ttK_\nu = \ttP^\top \ttK_\nu \ttP
\end{align*}
and right-hand sides
\begin{align}\label{eq:rhs1}
\wt \ttF^n = \ttR\ttM_{\eps+\tau \sigma/2}^{-1}\ttf^n  \qquad 
\wt \ttG^n = \ttR\ttM_{\eps+\tau \sigma/2}^{-1}\ttg^n,
\end{align}
where $\ttf^n,\ttg^n$ are defined as in Lemma~\ref{lem:alg1}.
Hence, the projection $\wt E_h^n=\wt \Pi_h E_h^n$ of the solution of Method~\ref{meth:main} is given by \eqref{eq:impl2}. 
\end{lemma}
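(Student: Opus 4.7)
The plan is to multiply \eqref{eq:impl} on the left by $\ttM_{\eps+\tau\sigma/2}^{-1}$ to make the leading term explicit, then apply the restriction $\ttR$ to obtain an evolution equation for $\wt\ttE^n:=\ttR\ttE^n$, and finally rewrite the damping and stiffness contributions entirely in terms of the reduced vector. Two algebraic observations carry the whole argument. By Lemma~\ref{lem:prolongation} the matrix $\ttQ=\ttP\ttR=\ttP(\ttP^\top\ttP)^{-1}\ttP^\top$ is a symmetric idempotent with $\ttQ^\top=\ttR^\top\ttP^\top$. The second ingredient is that both $\wh\ttM_\sigma$ and $\ttK_\nu$ are invariant under conjugation by $\ttQ$.

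For the damping matrix the invariance is built in: Lemma~\ref{lem:alg1} gives $\wh\ttM_\sigma=\ttQ^\top\ttM_\sigma\ttQ$, so that
$$\wh\ttM_\sigma\,\dtau\ttE^{n-1/2}=\ttR^\top\ttP^\top\ttM_\sigma\ttP\,\ttR\,\dtau\ttE^{n-1/2}=\ttR^\top\,\wt\ttM_\sigma\,\dtau\wt\ttE^{n-1/2}$$
directly from $\wt\ttM_\sigma=\ttP^\top\ttM_\sigma\ttP$ and $\dtau\wt\ttE^{n-1/2}=\ttR\,\dtau\ttE^{n-1/2}$. For the curl matrix the analogous identity $\ttK_\nu=\ttQ\ttK_\nu\ttQ$ requires a short argument: by \eqref{eq:curlcommuting} the difference $v_h-\wt\Pi_h v_h$ is a linear combination of gradients $\grad(\lambda_i\lambda_j)$ over edges $e_{ij}\in\wt\calE_h$ and is therefore curl-free, so $\curl\wt\Pi_h v_h=\curl v_h$ for every $v_h\in V_h$. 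At the matrix level this yields $\ttK_\nu\ttQ=\ttK_\nu=\ttQ^\top\ttK_\nu$, and consequently $\ttK_\nu\ttE^n=\ttQ\ttK_\nu\ttQ\ttE^n=\ttR^\top\wt\ttK_\nu\wt\ttE^n$.

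Substituting these two identities into
$$\dtautau\ttE^n=\ttM_{\eps+\tau\sigma/2}^{-1}\bigl(-\wh\ttM_\sigma\dtau\ttE^{n-1/2}-\ttK_\nu\ttE^n+\ttf^n+\ttg^n\bigr)$$
and multiplying from the left by $\ttR$ produces, with the definitions $\wt\ttM_{\eps+\tau\sigma/2}^{-1}=\ttR\ttM_{\eps+\tau\sigma/2}^{-1}\ttR^\top$ and the right-hand sides \eqref{eq:rhs1}, exactly the reduced scheme \eqref{eq:impl2}. The identification $\wt E_h^n=\wt\Pi_h E_h^n$ is then immediate from Lemma~\ref{lem:prolongation}, since applying $\ttR$ to the $V_h$-coefficient vector of $E_h^n$ yields by construction the $\wt V_h$-coefficient vector of $\wt\Pi_h E_h^n$.

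The main obstacle is the stiffness identity $\ttK_\nu=\ttQ\ttK_\nu\ttQ$: it is the only step that uses a nontrivial geometric property of the basis $\{\Phi_{ij},\Phi_{ji}\}$ rather than a purely formal manipulation of $\ttP$ and $\ttR$. Once the curl-preservation of $\wt\Pi_h$ is available from \eqref{eq:curlcommuting}, the remainder is a routine matrix computation, and the explicit, sparse structure of the assembled right-hand sides in \eqref{eq:rhs1} follows at once from the sparsity statements in Lemma~\ref{lem:alg1} together with the two-entry blocks of $\ttR$.
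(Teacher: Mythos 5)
Your proposal is correct and follows essentially the same route as the paper: invert the block-diagonal mass matrix, use $\ttQ=\ttP\ttR$ together with $\wh\ttM_\sigma=\ttQ^\top\ttM_\sigma\ttQ$ and the curl-preservation of $\wt\Pi_h$ from \eqref{eq:curlcommuting} to write $\wh\ttM_\sigma=\ttR^\top\wt\ttM_\sigma\ttR$ and $\ttK_\nu=\ttQ^\top\ttK_\nu\ttQ=\ttR^\top\wt\ttK_\nu\ttR$, and then multiply from the left by $\ttR$. Your explicit justification of the stiffness identity via the gradient representation of $v_h-\wt\Pi_h v_h$ simply spells out what the paper cites as the algebraic commuting-diagram property, so there is no substantive difference.
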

\begin{proof}
Since $\ttM_{\eps + \tau \sigma/2}$ is regular, the scheme \eqref{eq:impl} is equivalent to 
\begin{align*}
\dtautau \ttE^n &= \ttM_{\eps + \tau \sigma/2}^{-1} \left( -\wh \ttM_\sigma \dtau \ttE^{n-1/2} - \ttK_\nu \ttE^n + \ttf^n + \ttg^n\right). 
\end{align*}
We further multiply this equation from the left by $\ttR^\top = \ttP (\ttP^\top \ttP)^{-1}$ and note that $\ttP \ttR = \ttP (\ttP^\top \ttP)^{-1} \ttP^\top = \ttQ$, which follows from the definition of the matrices. 
We see
\begin{align*}
\wh \ttM_\sigma = \ttR^\top \ttP^\top  \ttM_\sigma \ttP \ttR 
\qquad \text{and} \qquad 
\ttK_\nu = \ttQ^\top \ttK_\nu \ttQ 
= \ttR^\top \wt \ttK_\nu \ttR, 
\end{align*}
where we used the algebraic form of the commuting diagram property of the projection $\wt \Pi_h$, see \eqref{eq:curlcommuting}.
Using the definitions of $\wt \ttE^n$, $\wt \ttF^n$, and $\wt \ttG^n$, we thus conclude that
\begin{align*}
\dtautau \wt \ttE^n 
= \dtautau \ttR \ttE^n
&= \ttR \ttM_{\eps + \tau \sigma/2}^{-1} (-\ttR^\top \wt \ttM_\sigma \ttR \,  \dtau \ttE^{n-1/2} - \ttR^\top \wt \ttK_\nu \ttR \ttE^n + \ttf^n + \ttg^n) \\
&= \wt \ttM_{\eps + \tau \sigma/2}^{-1} (- \wt \ttM_\sigma \dtau \wt \ttE^{n-1/2} - \wt \ttK_\nu \wt \ttE^n) + \wt \ttF^n + \wt \ttG^n
\end{align*}
This already yields the algebraic form given in \eqref{eq:impl2} and concludes the proof.
\end{proof}

\begin{remark}\label{rem:rhs}
The choice \eqref{eq:rhs1} for the right-hand sides $\wt \ttF^n$, $\wt \ttG^n$ in the reduced method makes use of the vectors $\ttf^n$, $\ttg^n$ with two degrees of freedom for each edge. 
An alternative and more direct choice would be
\begin{align}\label{eq:rhs2}
\wt \ttF^n := \wt \ttM_{\eps + \tau \sigma/2}^{-1}\wt\ttf^n  \qquad \text{and}\qquad
\wt \ttG^n := \wt \ttM_{\eps + \tau \sigma/2}^{-1}\wt\ttg^n,
\end{align}
which now only depends on the vectors $\wt\ttf^n$ and $\wt\ttg^n$ assembled on the reduced finite element space $\wt V_h$. 
This modification can be included in our analysis, if one replaces assumption (A5) by choosing $\wt\calE_h \subset \calE_h$ such that
\begin{enumerate}[topsep=1em]
\item[(A5$^*$)] \label{ass:A5m}
$\sigma$ and $f$ are continuous across edges $e \in \wt \calE_h$ lying in the interior $\Omega$, and $\sigma=0$, $g=0$ for all edges $e \in \wt \calE_h$ on the boundary $\partial\Omega$.
\end{enumerate}
This allows us to apply a variant of Lemma~\ref{lem:projprop} which covers the additional consistency error introduced by the representation of $f$; the boundary term $g$ is already fully covered by the original assumption (A5) since it vanishes along all edges where no reduction is applied.
The time-stepping scheme resulting from this modified choice of the right-hand sides can be stated as 
\begin{align} \label{eq:impl3}
\dtautau \wt \ttE^n 
= \wt \ttM_{\eps + \tau \sigma/2}^{-1} (- \wt \ttM_\sigma \dtau \wt \ttE^{n-1/2} - \wt \ttK_\nu \wt \ttE^n + \wt\ttf^n + \wt\ttg^n),
\end{align}
and used as an alternative to \eqref{eq:impl2}. 
In our numerical tests, we will compare the two versions \eqref{eq:impl2} and \eqref{eq:impl3}, in particular, also highlighting the importance of the additional conditions in assumption (A5$^*$) for the latter.
\end{remark}

\section{Numerical validation}\label{sec:numerics}
As a model problem for our numerical tests, we consider the scattering of a plane electromagnetic wave from a cylinder. 
Under the usual symmetry assumptions, this can be modeled by Maxwell's equations in two space dimensions. 
All results from the three-dimensional setting translate almost verbatim. 
The main differences are that  $\Th$ now corresponds to a triangular mesh and that the space $\wt V_h$ is a subspace of $P_1(\K)^2\cap H(\curl;\Omega)$. 
Moreover, the triangular vertex rule
\begin{align}
(a,b)_h = \sum\nolimits_T   \tfrac{|T|}{3} \sum\nolimits_{v_i \in T} a(v_i) \cdot b(v_i)
\end{align}
is used for the numerical quadrature.

\subsection*{Model problem.}
The geometric setup of our test problem is illustrated in Figure~\ref{fig:2}. 
The computational domain $\Omega=(-1,1)^2$ is split into two parts: 
The circular region $\Omega_S$ of radius $r_S=0.3$ contains the scatterer with material parameters $\varepsilon=\nu=1$ and $\sigma=100$. 
For the surrounding medium $\Omega \setminus \Omega_S$ we take $\varepsilon=\nu=1$ and $\sigma=0$.
\begin{figure}[ht!]
\centering
\begin{tikzpicture}[scale=2]
\draw[black, line width=0.0mm, fill=blue, opacity = 0] (-1,-1) -- (-1, 1) -- ( 1, 1) -- ( 1,-1) -- cycle;
\draw[black, line width=0.5mm] (-1,-1) -- (-1, 1) -- ( 1, 1) -- ( 1,-1) -- cycle;
\draw[black, line width=0.0mm, fill=red, opacity = 0] {(0,0) circle (0.33)}; %
\draw[black, line width=0.5mm] {(0,0) circle (0.33)};
\node[text width=3cm] at (0.64,0) {$\Omega_S$};
\node[text width=3cm] at (1,-0.64) {$\Omega_{R\setminus S}$};
\draw[] (-1,-1) node[below] {\tiny $(-1,-1)$};
\draw[] (1,-1) node[below] {\tiny $(1,-1)$};
\draw[] (-1,1) node[above] {\tiny $(-1,1)$};
\draw[] (1,1) node[above] {\tiny $(1,1)$};
\end{tikzpicture}
\caption{Computational domain for the wave scattering problem.\label{fig:2}}
\end{figure}
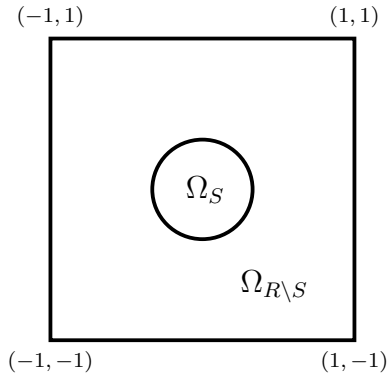

\subsection*{Test scenario}
For excitation of the fields, the initial and boundary conditions are chosen to match the corresponding traces of the plane wave 
\begin{align*}
\widetilde E(x,y,t)=
\begin{pmatrix}
-k_2 \\ k_1    
\end{pmatrix}
a(k_1x+k_2x-t)
\end{align*}
with envelope $a(x)=2e^{-10(x+3)^2}$ and $k=(k_1,k_2)=\frac1{\sqrt{2}}(1,1)$. 
The function $\widetilde E$ 
satisfies \eqref{eq:maxwell1}--\eqref{eq:maxwell2} with $\varepsilon=\nu=1$, $\sigma=0$, and data $f=0$ and $g=n \times (\nu \curl \widetilde E)$.
As a consequence, the plane wave will first propagate freely through $\Omega \setminus \Omega_S$, but then get scattered at the inclusion $\Omega_S$ with high conductivity. 
The scattered wave will then travel back through the free region $\Omega \setminus \Omega_S$ and finally be artificially reflected at the outer boundary $\partial\Omega$.
In Figure~\ref{fig:wave}, some corresponding snapshots of the numerical solution $E_h(t)$ are depicted. 
\begin{figure}[ht!]
    \centering
    \includegraphics[width=0.31\textwidth]{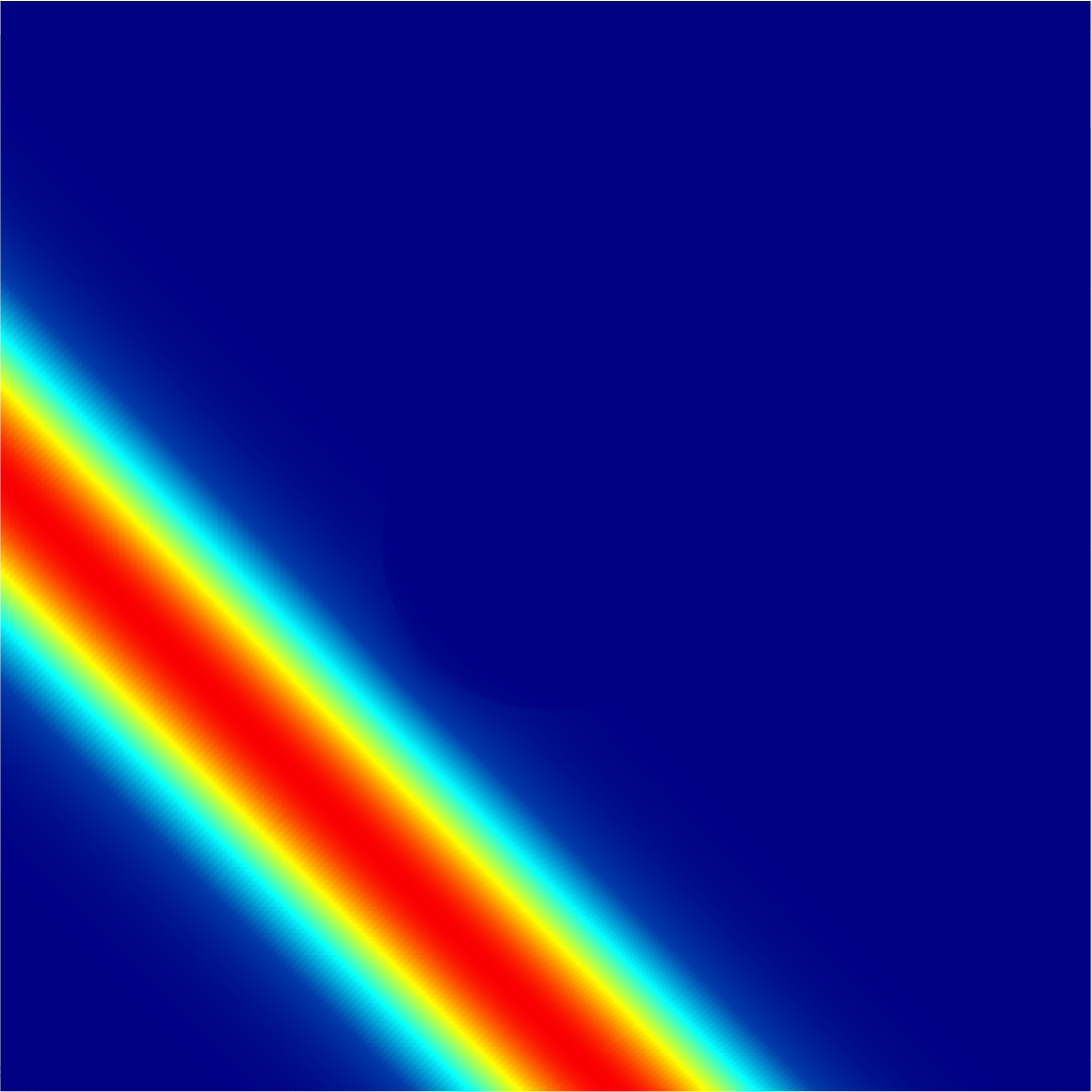}\quad
    \includegraphics[width=0.31\textwidth]{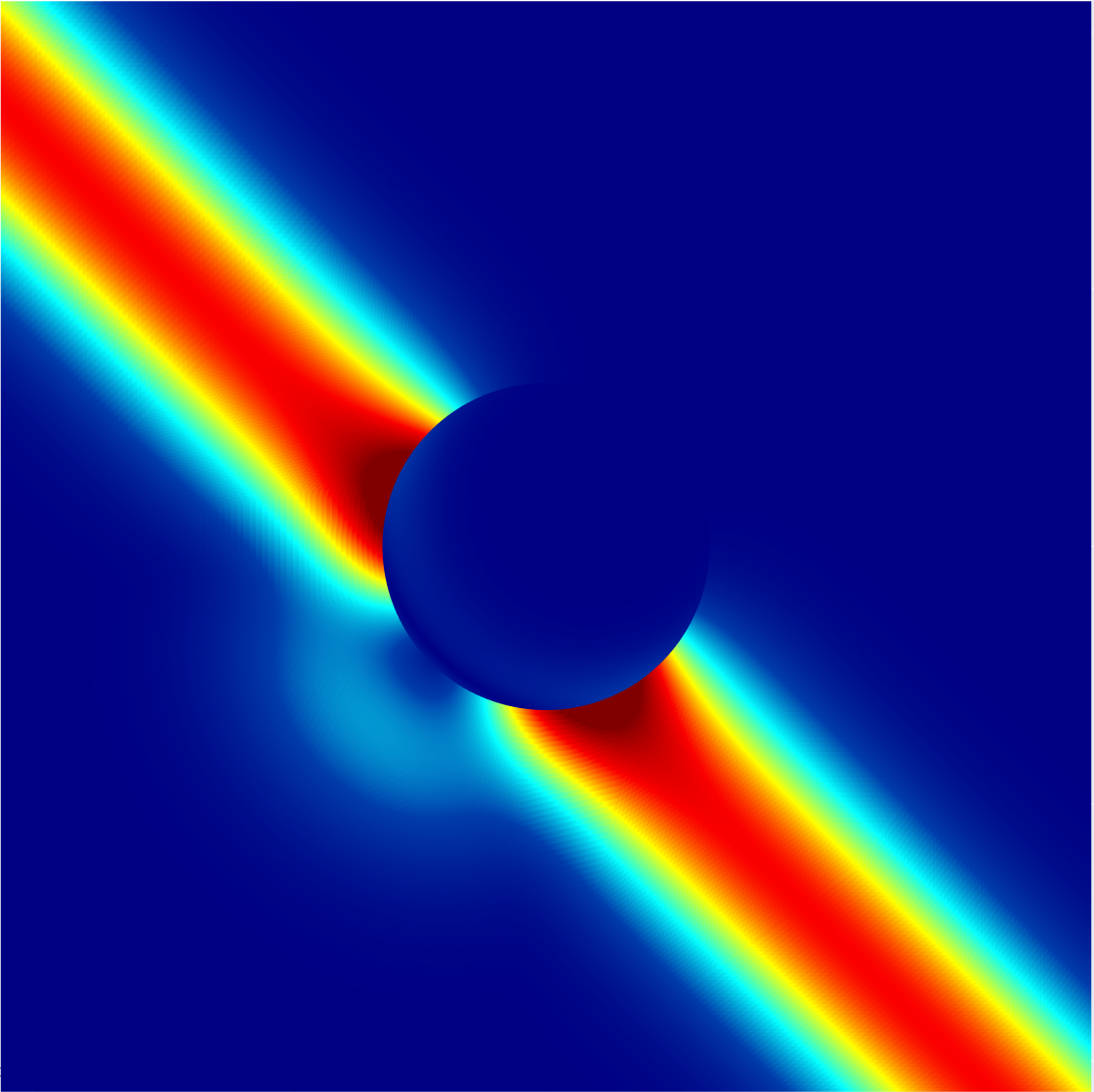}\quad
    \includegraphics[width=0.31\textwidth]{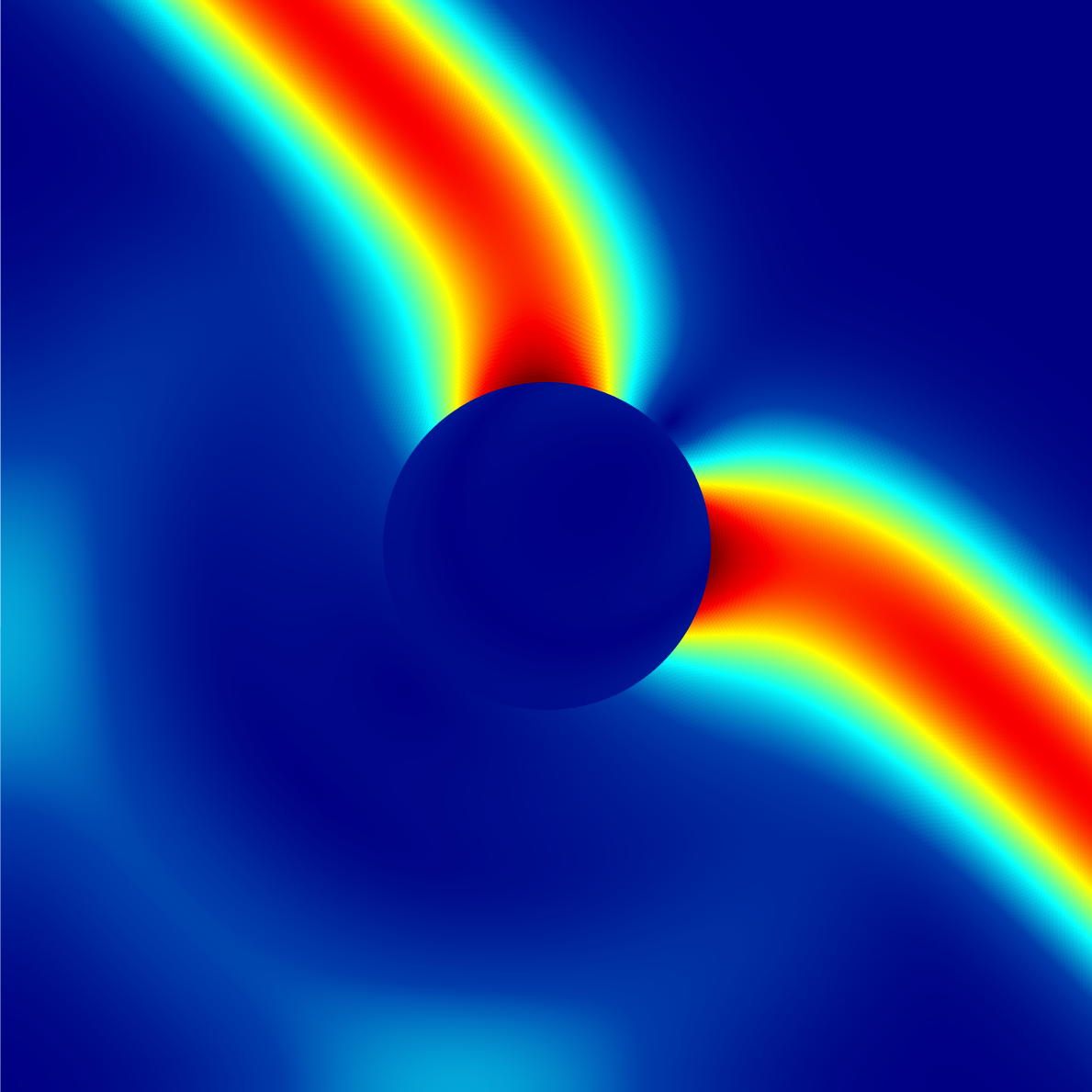}
    \caption{Absolute values of the simulated electric field $E_h(t)$ at time $t=1.5$ (left), $t=2$ (middle), and $t=2.5$ (right).}
    \label{fig:wave}
\end{figure}

\subsection*{Discretization setup}

To examine the convergence of the methods, we consider a sequence of meshes $\Th$ obtained by uniform refinement of an initial unstructured coarse mesh. The time step $\tau$ is chosen following the CFL condition (A4).
Let us note that $\sigma$ is continuous everywhere but on the edges along $\partial\Omega_S$ and vanishes on the outer boundary $\partial\Omega$.
To illustrate our theoretical results, we consider Method~\ref{meth:main} for three different choices of $\wt\calE_h$, namely 
\begin{itemize}
\item $\wt\calE_h = \emptyset$: This leads to $\wt V_h=\NC_1$ with two degrees of freedom for every edge;
\item $\wt\calE_h\subseteq \calE_h$ such that (A5) is satisfied, i.e. $\N_0 \subset \wt V_h \subset \NC_1$: in particular, only one degree of freedom is used for all edges not lying on the boundary $\partial \Omega_S$;  
\item $\wt\calE_h=\calE_h$: This amounts to $\wt V_h = \N_0$ and violates assumption (A5).
\end{itemize}
The corresponding finite element solutions are denoted by $E_h^\NC$, $E_h^{\N^+}$ and $E_h^\N$.
For measuring the error between two functions $E_h$ and $E_h^*$, we use 
\begin{align*}
\tnorm E_h - E_h^* \tnorm \coloneqq \frac{\|\dtau (E_h - E_h^*)\|_{\ell_\infty(L^2(\Omega))}}{\|\dtau E_h^*\|_{\ell_\infty(L^2(\Omega))}} &+ \frac{\|\curl (\widehat E_h - \widehat E_h^*)\|_{\ell_\infty(L^2(\Omega))}}{\|\curl \widehat E_h^*\|_{\ell_\infty(L^2(\Omega))}}
\end{align*}

\subsection*{Computational results}
We first study the error and convergence rate of the $\NC_1$ method.
Since the exact solution to our model problem is somewhat cumbersome to compute, we estimate the discretization errors by comparing the numerical solutions obtained on two different nested meshes. 
\begin{table}[ht!]
\begin{tabular}{c||c|c|c} 
$h\approx$ & $\tnorm E^\NC_{h}-E^\NC_{2h} \tnorm$  & eoc & dofs \\
\hline
\hline
\rule{0pt}{2.1ex}
$2^{-3}$ & $0.779603$ & ---    & 2.296 \\
$2^{-4}$ & $0.373586$ & $1.06$ & 9.056 \\
$2^{-5}$ & $0.185312$ & $1.01$ & 35.968 \\  
$2^{-6}$ & $0.093906$ & $0.98$ & 143.360 \\  
$2^{-7}$ & $0.046257$ & $1.02$ & 572.416 \\  
$2^{-8}$ & $0.023158$ & $1.00$ & 2.287.616 \\  
$2^{-9}$ & $0.011612$ & $1.00$ & 9.146.368 \\
\end{tabular}
\caption{Errors, the estimated order of convergence (eoc) and the number of degrees of freedom (dofs) for a multitude of mesh sizes $h$ with fixed time step sizes $\tau = 0.28\,h$.\label{tab:tab1}}
\end{table}
As one can infer from Table~\ref{tab:tab1}, first-order convergence of the $\NC_1$-method is observed, which is in perfect agreement with our previous studies~\cite{EggerRadu20c_maxwellyee, Radu22}.

In the following tests, we evaluate the convergence rates for the $\N_0^+$ and $\N_0$ methods by comparing them to the simulations obtained with the $\NC_1$ method. 
\begin{table}[ht!]
\centering
\begin{tabular}{c||c|c|c||c|c|c} 
$h\approx$ & $\tnorm E_h^{\N^+}-E_h^\NC\tnorm$ & eoc & dofs & $\tnorm E_h^{\N}-E_h^\NC \tnorm $  & eoc & dofs \\
\hline
\hline
\rule{0pt}{2.1ex}
$2^{-3}$ & $0.098385$ & ---    & 1.164     & $0.105887$ & ---    & 1.148 \\
$2^{-4}$ & $0.049076$ & $1.00$ & 4.560     & $0.053302$ & $1.00$ & 4.528 \\
$2^{-5}$ & $0.024133$ & $1.02$ & 18.048    & $0.027561$ & $0.95$ & 17.984 \\
$2^{-6}$ & $0.011923$ & $1.01$ & 71.808    & $0.015084$ & $0.87$ & 71.680 \\
$2^{-7}$ & $0.005916$ & $1.01$ & 286.464   & $0.008739$ & $0.79$ & 286.208 \\
$2^{-8}$ & $0.002945$ & $1.00$ & 1.144.320 & $0.005371$ & $0.70$ & 1.143.808 \\
$2^{-9}$ & $0.001469$ & $1.00$ & 4.574.208 & $0.003474$ & $0.63$ & 4.573.184
\end{tabular}
\medskip
\caption{Errors, the estimated order of convergence (eoc) and the number of degrees of freedom (dofs) for a multitude of mesh sizes $h$ with fixed time step sizes $\tau = 0.28\,h$.\label{tab:tab2}} 
\end{table}
For the $\N_0^+$-method, assumption (A5) is valid, and we observe first-order convergence as predicted by our theoretical results. Recall that this method has two degrees of freedom (only) for all edges at the interface $\partial\Omega_S$ where $\sigma$ has a jump.
The $\N_0$-method, on the other hand, uses algebraic reduction also at the interface, thus violating assumption (A5). This indeed results in a reduction of the convergence rate, as could be expected from our analysis. 
Condition (A5) therefore seems necessary to obtain the optimal convergence rates in the presence of discontinuous conductivities.
In further tests, we also verified that 
assumption (A5$^*$) is necessary if the right-hand sides are chosen according to \eqref{eq:rhs2}.

\subsection*{Additional consistency errors}
To better understand the convergence breakdown when violating the consistency conditions (A5) and (A5$^*$), we illustrate in Figure~\ref{fig:err} the local contributions of the errors $\tnorm E_h^\N-E_h^\NC\tnorm$ for $h=2^{-4}$ for the two choices \eqref{eq:rhs1} and \eqref{eq:rhs2} of the right-hand sides.
\begin{figure}[ht!]
\includegraphics[width=0.35\textwidth]{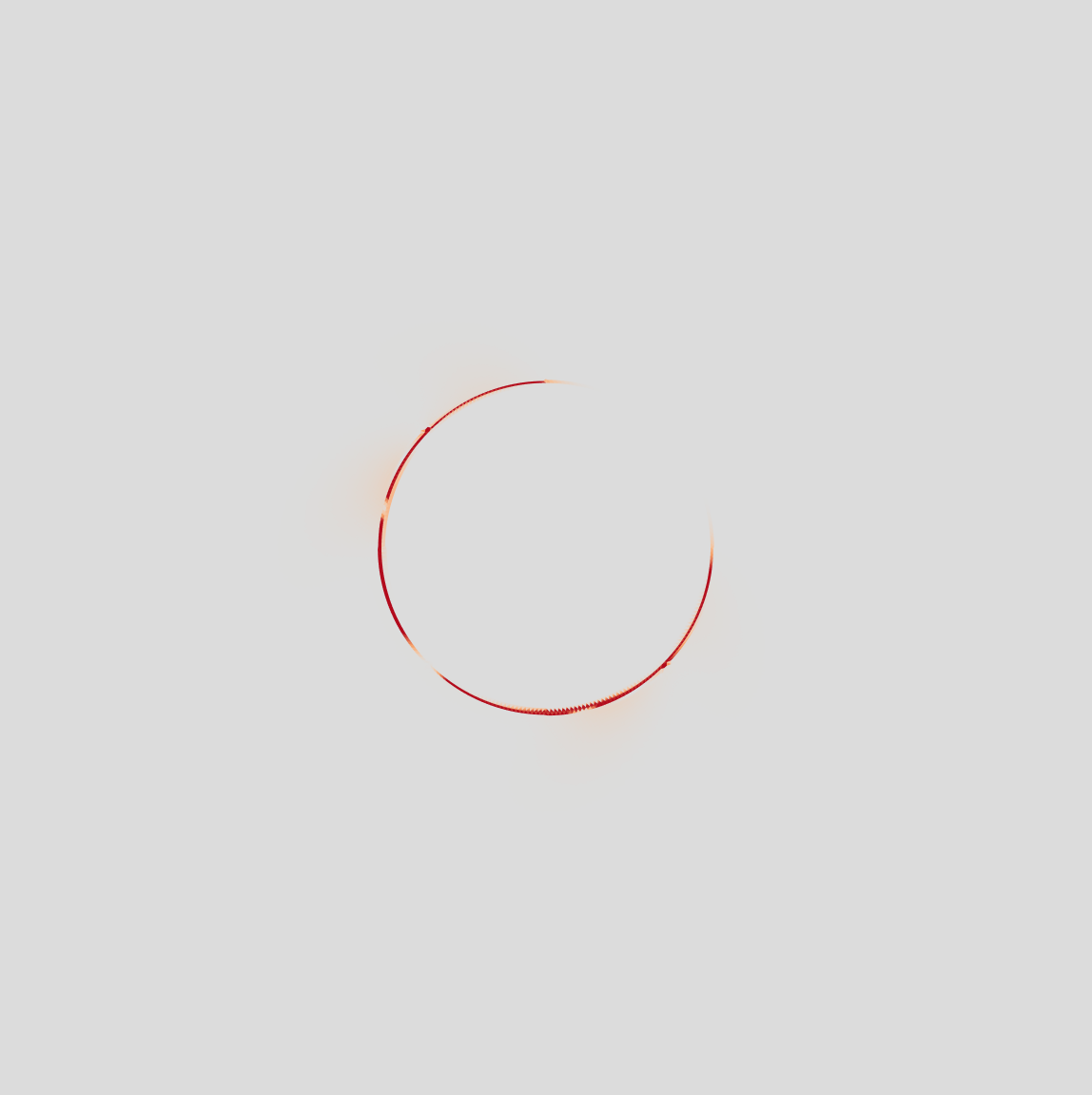}\quad
\includegraphics[width=0.35\textwidth]{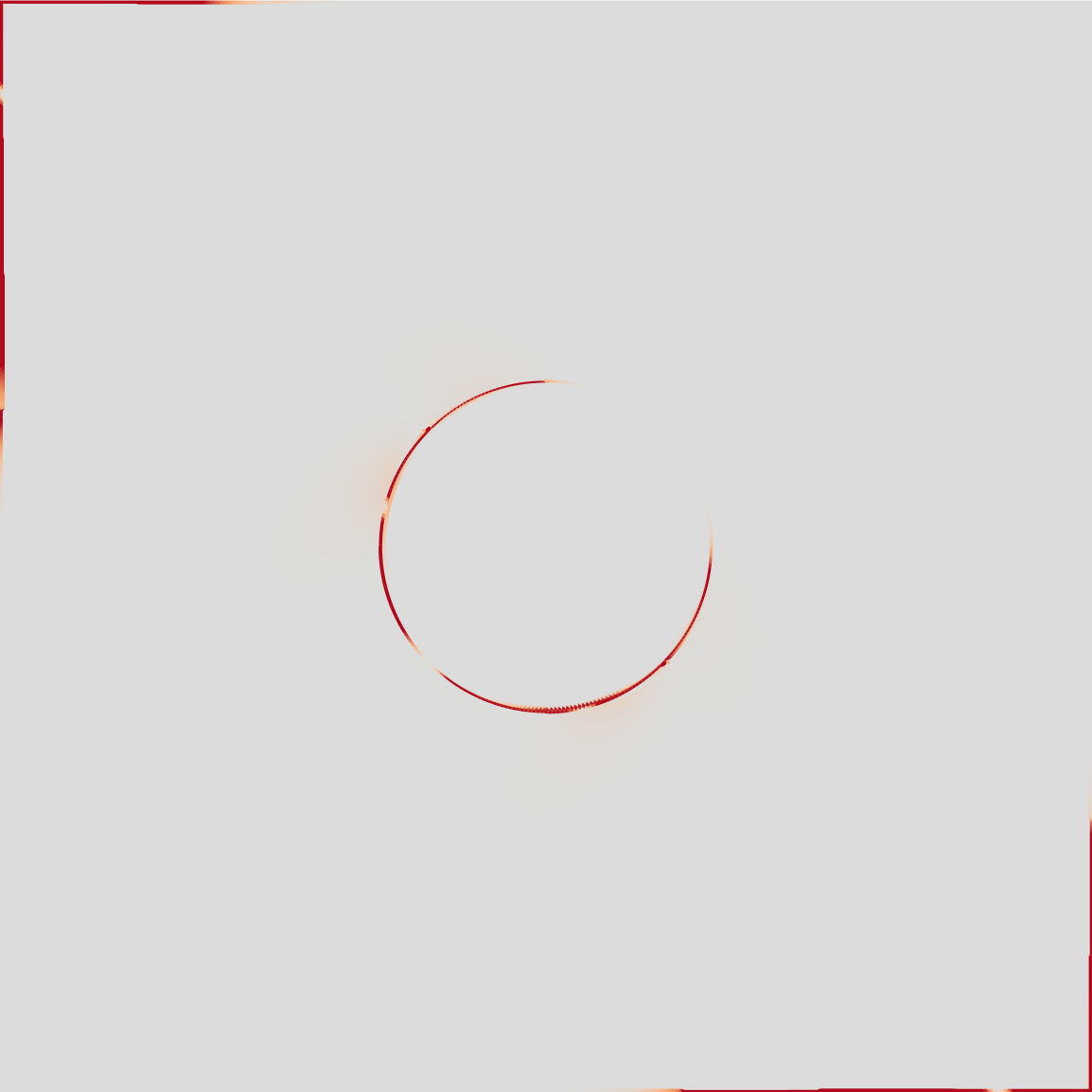}\quad
\captionsetup{width=.8\linewidth}
\caption{Local contributions of the error $\tnorm E_h^{\N}-E_h^\NC\tnorm $ for $h=2^{-4}$ with different implementations of the right-hand sides. The red color indicates which part of the mesh the error dominates. \label{fig:err}}
\end{figure}
As can be expected, the dominating error contributions come from the interface and the boundary, where the consistency conditions (A5) respectively (A5$^*$) are violated.

\subsection*{Inspection of the CFL condition}
As we have observed in assumption (A4), the CFL condition is somewhat non-standard, introducing an additional term depending on the projection $\wPi_h$.
To illustrate the effect of this additional term, we compute the largest time step $\tau=\tau_{\max}$ for which
\begin{align*}
\frac{\tau_{\max}^2}4 \ttv^\top\ttK_\nu \ttv - \frac12\ttv^\top\ttM_{\eps}\ttv + \gamma\frac{\tau_{\max}}2\Big|\ttv^\top\Big(\wh \ttM_\sigma-\ttM_\sigma\Big)\ttv\Big| \ge 0
\end{align*}
for all vectors $\ttv$, where $\gamma=0$ for the $\NC_1$ method and $\gamma=1$ for the $\N_0^+$ method.
By Taylor expansion, we note that $\tau_{\max}$ behaves approximately like
\begin{align}\label{eq:htau}
\tau_{\max} \sim \sqrt{\frac\varepsilon\nu} h + \gamma\frac\sigma{2\nu}h^2.
\end{align}
For $\gamma=0$ or $\sigma=0$, we obtain the classic linear dependence $\tau \le C h$.
In Table~\ref{tab:cfl}, we display the values of $C=\tau_{\max}/h$ for each of the methods for the model parameters used in our simulations.
\begin{table}[ht!]
\begin{tabular}{c||c|c} 
$h$ & $\NC_1$  & $\N_0^+$ \\
\hline
\hline
\rule{0pt}{2.1ex}
$2^{-3}$ & $0.391076$ & $0.210000$  \\
$2^{-4}$ & $0.381624$ & $0.317041$  \\
$2^{-5}$ & $0.377977$ & $0.377977$  \\  
$2^{-6}$ & $0.376846$ & $0.376846$  \\  
$2^{-7}$ & $0.376528$ & $0.376528$
\end{tabular}
\caption{CFL constants for both the $\NC_1$ and the $\N_0^+$ method on a sequence of uniformly refined meshes.\label{tab:cfl}}
\end{table}
First note that the CFL constant $C$ is uniformly bounded from above for both methods.
While for larger values of $h$, the CFL constant is somewhat more stringent for the $\N_0^+$ method, the constant $C$ behaves almost the same for small $h$, which is in perfect agreement with \eqref{eq:htau}.
The algebraic reduction hence does not have a severe effect on the maximal admissible time step.

\section{Discussion}\label{sec:discussion}

In this paper, we proposed and analyzed finite-element schemes which can be seen as a natural extension of the Yee scheme to unstructured grids and inhomogeneous lossy media.
While the number of degrees of freedom can be reduced to one for almost all edges, the necessity incorporation of two degrees of freedom at interfaces or boundaries with material jumps has been illustrated. 
A full convergence analysis of the schemes could be provided and optimal convergence could be proven under reasonable assumptions.
Alternative extensions of the Yee scheme to triangular and tetrahedral elements based on dual cell complexes have been proposed previously in \cite{CodecasaKapidaniSpecognaTrevisan18,CodecasaPoliti08,KapidaniCodecasaSchoberl21}.
While in some settings, the algebraic form of these methods is similar or even identical to ours, the constructions in these papers follow a rather geometric approach which complicates the error analysis. 
The results obtained in our paper may be useful to gain further insight also into these related methods.

\end{document}